\newtheorem{theorem}{Theorem}
\newtheorem{lemma}[theorem]{Lemma}
\newtheorem{definition}[theorem]{Definition}
\newtheorem{example}[theorem]{Example}
\newtheorem{remark}[theorem]{Remark}
\newcommand{\R}{\ensuremath{\mathbb{R}}}
\newcommand{\Z}{\ensuremath{\mathbb{Z}}}
\newcommand{\C}{\ensuremath{\mathbb{C}}}
\newcommand{\N}{\ensuremath{\mathbb{N}}}
\newcommand{\T}{\ensuremath{\mathbb{X}}}
\newcommand{\D}{\ensuremath{\mathbb{D}}}
\DeclareMathOperator{\diag}{diag}
\DeclareMathOperator{\subdiag}{subdiag}
\DeclareMathOperator{\supdiag}{superdiag}
\DeclareMathOperator{\im}{Im}
\DeclareMathOperator{\re}{Re}
\DeclareMathOperator{\range}{range}
\DeclareMathOperator{\trans}{T}
\DeclareMathOperator{\loc}{loc}
\DeclareMathOperator{\acloc}{AC_{loc}}
\numberwithin{equation}{section}
\numberwithin{theorem}{section}
\begin{document}

\title[Titchmarsh-Sims-Weyl Theory]{Titchmarsh-Sims-Weyl theory for complex Hamiltonian systems on Sturmian time scales}

\author[anderson]{Douglas R. Anderson} 
\address{Department of Mathematics and Computer Science, Concordia College, Moorhead, MN 56562 USA \\ visiting the School of Mathematics, The University of New South Wales Sydney 2052, Australia}
\email{andersod@cord.edu}

\keywords{linear equations; non-self-adjoint operator; Orr-Sommerfeld equation; Sturm-Liouville theory; even-order equations, Weyl-Sims theory.}
\subjclass[2000]{34N05; 34B20; 34B27; 47A10; 47B39}

\begin{abstract} 
We study non-self-adjoint Hamiltonian systems on Sturmian time scales, defining Weyl-Sims sets, which replace the classical Weyl circles, and a matrix-valued $M-$function on suitable cone-shaped domains in the complex plane. Furthermore, we characterize realizations of the corresponding dynamic operator and its adjoint, and construct their resolvents. Even-order scalar equations and the Orr-Sommerfeld equation on time scales are given as examples illustrating the theory, which are new even for difference equations. These results unify previous discrete and continuous theories to dynamic equations on Sturmian time scales.
\end{abstract}

\maketitle\thispagestyle{empty}

%\pagestyle{plain}
%\thispagestyle{plain}

%%%%%%%%%%%%%%%%%%%%%%%%%%%%%%%
%                             %
%   Introduction Section 1    %
%                             %
%%%%%%%%%%%%%%%%%%%%%%%%%%%%%%%

\section{Introduction}\label{secintro}

Brown, Evans, and Plum \cite{b2} study Titchmarsh-Sims-Weyl theory for the complex, generally non-self-adjoint continuous Hamiltonian system
\begin{equation}\label{ms1.2}
 Jy'(t,\lambda)=\Big(\lambda A(t)+B(t)\Big)y(t,\lambda), \quad t\in[0,\infty),
\end{equation}
where $A$ and $B$ are $2n\times 2n$ matrix-valued functions with matrix weight function $A(t)\ge 0$, and
$J=\left(\begin{smallmatrix} 0_n & -I_n \\ I_n & 0_n \end{smallmatrix}\right)$. In a related work, Monaquel and Schmidt \cite{ms} introduce the uniformly discrete (the step size of the domain is a constant unit) counterpart to the continuous Hamiltonian system via
\begin{equation}\label{ms1.3}
 J{\bf\Delta}y(t,\lambda)=(\lambda A(t)+B(t))y(t,\lambda) \quad\text{for}\quad t\in[0,\infty)\cap\Z,
\end{equation}
where ${\bf\Delta}$ is a mixed right-and-left difference operator described via ${\bf\Delta}=\left(\begin{smallmatrix} \Delta & 0_n \\ 0_n & \nabla \end{smallmatrix}\right)$ for the forward difference operator $\Delta u(t)=u(t+1)-u(t)$ and the backward difference operator $\nabla u(t)=u(t)-u(t-1)$, and where again $A$ and $B$ are $2n\times 2n$ matrix-valued functions but with the assumption $A(t)>0$.  

We seek to extend \eqref{ms1.2}, \eqref{ms1.3} to Sturmian time scales (introduced in Ahlbrandt, Bohner, and Voepel \cite{abv}), thus unifying the  continuous \eqref{ms1.2} and discrete \eqref{ms1.3} non-self-adjoint theories in a single setting. As we do so, the robust nature of time-scale theory will offer more flexibility when discretizing \eqref{ms1.2}, for example allowing time-varying step sizes in the domain, than that represented by \eqref{ms1.3}. Just as in the translation of the continuous theory to the uniformly discrete case, the unification and extention of the two theories to Sturmian time scales require some care and provide unexpected difficulties. The first such issue is defining an appropriate time-scale dynamic operator that generalizes ${\bf\Delta}$ in \eqref{ms1.3} and accounts for the shifts at scattered domain points. To construct such an operator, we follow \cite{ms} by utilizing a partial left-shift operator applied to $y$ defined on $[t_0,\infty)_\T$ for some $t_0\in\T$ by 
\begin{equation}\label{leftshift}
 \widehat{y}(t):=\begin{pmatrix} y_1(t) \\ y_2^\rho(t) \end{pmatrix}, \quad y_1:[t_0,\infty)_\T\rightarrow\C^{n}, \quad y_2:[\rho(t_0),\infty)_\T\rightarrow\C^{n}.
\end{equation}
Then we introduce the complex Hamiltonian dynamic system on Sturmian time scales given by
\begin{equation}\label{maineq}
 J\widehat{y}^\Delta(t)=\Big(\lambda A(t)+B(t)\Big) y(t), \quad t\in[t_0,\infty)_\T:=[t_0,\infty)\cap\T, \quad J=\left(\begin{smallmatrix} 0_n & -I_n \\ I_n & 0_n \end{smallmatrix}\right),
\end{equation}
where we assume that the Hermitian weight function $A:[t_0,\infty)_\T\rightarrow\C^{2n,2n}$ and the (generally) non-Hermitian coefficient function $B:[t_0,\infty)_\T\rightarrow\C^{2n,2n}$ satisfy the block forms
\begin{equation}\label{ABdef}
 A(t) = \begin{pmatrix} A_1(t) & 0_n \\ 0_n & A_2(t) \end{pmatrix} \ge 0 \quad\text{and}\quad B(t) = \begin{pmatrix} B_1(t) & B_2(t) \\ B_3(t) & B_4(t) \end{pmatrix} 
\end{equation}
for $n\times n$ rd-continuous complex matrices $A_1$, $A_2$, $B_1$, $B_2$, $B_3$, and $B_4$ such that
\begin{equation}\label{msA1}
  E_2(t):=\Big(I_n+\mu(t) B_2(t)\Big)^{-1} \quad\text{and}\quad \Big(I_n+\mu(t) B_3(t)\Big)^{-1} \quad\text{exist.}
\end{equation} 
Our positive semi-definite assumption in \eqref{ABdef} on $A$ weakens the positive definite assumption made on $A$ in the discrete case \cite{ms}; in compensation, we make a certain definiteness assumption below in \eqref{bep3.14} that mirrors the continuous case \cite[(3.14)]{b2}. Using standard notation \cite{sh}, $\T$ is a nonempty unbounded closed subset of the set of real numbers $\R$ such that the left jump operator $\rho$ and right jump operator $\sigma$ given by
$$ \rho(t)=\sup\{s\in\T:s<t\} \quad\text{and}\quad \sigma(t)=\inf\{s\in\T: s>t\} $$
satisfy the Sturmian \cite{abv} time-scale condition
\begin{equation}\label{sturmfact}
 \sigma(\rho(t)) = \rho(\sigma(t)) = t, \quad t\in[t_0,\infty)_\T,
\end{equation}
with the compositions $y\circ\rho$ and $z\circ\sigma$ denoted by $y^\rho$ and $z^\sigma$, respectively; the graininess functions are defined by $\mu(t)=\sigma(t)-t$ and $\nu(t)=t-\rho(t)$; the delta derivative of $y$ at $t\in\T$, denoted $y^\Delta(t)$, and the nabla derivative of $y$ at $t\in\T$, denoted $y^\nabla(t)$, are the vectors (provided they exist) given by, respectively,
$$ y^\Delta(t):=\lim_{s\rightarrow t}\frac{y^\sigma(t)-y(s)}{\sigma(t)-s} \quad\text{and}\quad y^\nabla(t):=\lim_{s\rightarrow t}\frac{y^\rho(t)-y(s)}{\rho(t)-s}. $$
Note that if $\T=\R$ we have
$$ J\widehat{y}^\Delta(t) = Jy'(t) $$
as in \eqref{ms1.2}, while if $\T=\Z$ we have
$$ J\widehat{y}^\Delta(t) = J\Delta\widehat{y}(t) = J\Delta \left(\begin{smallmatrix} y_1(t) \\ y_2(t-1) \end{smallmatrix}\right) = J \left(\begin{smallmatrix} \Delta y_1(t) \\ \nabla y_2(t) \end{smallmatrix}\right) = J{\bf\Delta}y(t) $$
as in \eqref{ms1.3}, giving credibility to system \eqref{maineq} as a unifying vehicle for this theory. To recover the case $\T=\R$, just set $\mu(t)=0$ and $\sigma(t)=\rho(t)=t$, and to recover the case $\T=\Z$, set $\mu(t)=1$, $\sigma(t)=t+1$, and $\rho(t)=t-1$.

For $A$ given in \eqref{ABdef}, let $L_A^2(t_0,\infty)_\T$ denote the Hilbert space of measurable $\C^{2n}-$valued functions $y$ for which the delta integral exists and satisfies
\begin{equation}\label{normdef}
 \|y\|_A^2:=\int_{t_0}^{\infty} y^*(t)A(t)y(t)\Delta t < \infty.
\end{equation}
Vector functions $x,y\in L_A^2(t_0,\infty)_\T$ are said to be $A-$square integrable, with the scalar product defined via
\begin{equation}\label{inprod}
 (x,y)_A:=\int_{t_0}^{\infty}y^*(t)A(t)x(t)\Delta t, \quad x,y\in L_A^2(t_0,\infty)_\T. 
\end{equation}
As $A$ in \eqref{ABdef} may be singular, the inner product for $L_A^2(t_0,\infty)_\T$ in \eqref{inprod} may not be positive. To account for this, we introduce the following quotient space. For $x,y\in L_A^2(t_0,\infty)_\T$, $x$ and $y$ are said to be equal iff $\|x-y\|_A=0$. In this context $L_A^2(t_0,\infty)_\T$ is an inner product space with inner product \eqref{inprod}. In addition, a $2n\times n$ matrix is $A-$square integrable if and only if each of its columns is $A-$square integrable. The same terminology will be used for other $2n\times 2n$ matrix weight functions. 

Next we define the linear vector function space
\begin{equation}\label{Ddef}
 \D:=\left\{y=(y_1,y_2)^{\trans}\in L_A^2(t_0,\infty)_\T\Big|\; y_1,y_2^\rho:[t_0,\infty)_\T\rightarrow\C^n \; \text{are delta differentiable} \right\}. 
\end{equation}
Then $y$ is a solution of \eqref{maineq} if and only if $y\in\D$ and $y$ satisfies \eqref{maineq}. With \eqref{sturmfact} in mind, we point out that in \eqref{Ddef} the assumption $y_2^{\rho}$ is delta differentiable is equivalent to the assumption that $y_2$ is delta differentiable. For more on time scales generally, see Bohner and Peterson \cite{bp1,bp2}. 

In \cite{b1}, Brown, Evans, McCormack, and Plum study the equation
$$ -(py')'+qy=\lambda w y, $$
allowing the potentials to be complex-valued functions, and in so doing relax some of the conditions used by Sims \cite{sims}; see \eqref{bep3.23} below. Atkinson \cite{fva}, Hinton and Shaw \cite{hs1,hs2}, and Krall \cite{krall} all worked on \eqref{ms1.2} in the case of symmetric coefficients for $\T=\R$. In the discrete symmetric case, Clark and Gesztesy \cite{cg1,cg2} presented a Weyl-Titchmarsh theory for a version of \eqref{ms1.3}, while Shi \cite{shi2} did the same for a version of \eqref{alteq} given below. For more on related discrete theory for Hamiltonian systems see Ahlbrandt \cite{ahl}, Ahlbrandt and Peterson \cite{ap}, Bohner, Do\v{s}l\'{y}, and Kratz \cite{bdk}, Shi \cite{shi1}, Shi and Wu \cite{sw}, and Sun, Shi, and Chen \cite{ssc}. Preliminary work on Hamiltonian systems on time scales includes Ahlbrandt, Bohner, and Ridenhour \cite{AlBoRi}, Anderson \cite{and}, Anderson and Buchholz \cite{ab}, and Hilscher \cite{Hilscher}. For some specific examples where non-self-adjoint problems may arise see Chandrasekhar \cite{chandra}. 

Our notation and organization of the discussion are obviously largely based on the continuous \cite{b2} and discrete \cite{ms} cases. The key to bringing these two theories together is finding an operator with domain inside a weighted Hilbert space such that existence and uniqueness of solutions can be shown for initial value problems, and an integration by parts formula holds. In the next section we will give a detailed analysis of these issues on Sturmian time scales. The Sturmian assumption in \eqref{sturmfact} may from one point of view seem unduly restrictive; from another it can be seen as surprising that the discrete and continuous theories can be combined at all, as there is no a priori physical reason why this theory should exist on all possible time scales, especially pathological ones. Assumption \eqref{sturmfact} merely requires that all points be dense from both sides or scattered from both sides.  With this supposition in place, we will see how unexpectedly harmonious the continuous and discrete cases can be made when treated concurrently in this context.

%%%%%%%%%%%%%%%%%%%%%%%%%%%%%%%%%%%%
%                                  %
% The homogeneous system Section 2 %
%                                  %
%%%%%%%%%%%%%%%%%%%%%%%%%%%%%%%%%%%%

\section{The homogeneous system}\label{homsys}

We begin our analysis of the linear Hamiltonian system \eqref{maineq} in this section with an existence and uniqueness result.

% Theorem %

\begin{theorem}[Existence and Uniqueness]\label{thmeu}
Assume \eqref{msA1}. Then the linear Hamiltonian system \eqref{maineq} with initial condition 
\begin{equation}\label{msic} 
 \widehat{y}(t_0) = \left(\begin{smallmatrix} y_1(t_0) \\ y_2^\rho(t_0) \end{smallmatrix}\right) 
\end{equation}
has a unique solution $\widehat{y}=(y_1,y_2^\rho)^{\trans}$ with $y\in\D$.
\end{theorem}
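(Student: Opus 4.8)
The plan is to reduce the dynamic system \eqref{maineq} to an explicit one-step recursion in the left-shifted variable $\widehat{y}$, and then invoke the invertibility hypothesis \eqref{msA1} to march the solution forward (and backward) uniquely across each point of the time scale. The Sturmian condition \eqref{sturmfact} is essential here, since it guarantees that the points are uniformly scattered from both sides or dense from both sides, so that the delta derivative of the shifted variable $\widehat{y}^\Delta$ relates cleanly to increments of $y_1$ and $y_2$.

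First I would unpack the left-shift definition \eqref{leftshift} and the simple time-scale identity $z^{\Delta}(t)=\frac{z^{\sigma}(t)-z(t)}{\mu(t)}$ at scattered points to write $\widehat{y}^{\Delta}(t)$ in terms of $y_1^\sigma, y_1, y_2, y_2^\rho$. The key observation, using $\sigma(\rho(t))=t$ from \eqref{sturmfact}, is that the second block $(y_2^\rho)^\sigma = y_2^{\sigma\circ\rho}=y_2$, so that $\widehat{y}^\Delta$ has first block $y_1^\Delta$ and second block $(y_2^\rho)^\Delta = \frac{y_2(t)-y_2^\rho(t)}{\mu(t)}$, which is exactly the nabla-type increment on $y_2$. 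Substituting into $J\widehat{y}^\Delta = (\lambda A + B)y$ and using the block forms \eqref{ABdef} of $J$, $A$, and $B$ converts the equation into a coupled pair of algebraic relations linking $y_1^\sigma$ and $y_2$ to $y_1$ and $y_2^\rho$.

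Next I would solve this pair for the ``future'' data $\bigl(y_1^\sigma, y_2\bigr)$ in terms of the ``present'' data $\bigl(y_1, y_2^\rho\bigr)$. The upper block of $J$ carries $-I_n$ acting on the second equation and the lower block carries $I_n$ on the first; writing everything out, the coefficients of $y_1^\sigma$ and $y_2$ on the left assemble into exactly the matrices $I_n + \mu B_2$ and $I_n + \mu B_3$ (after appropriate rearrangement), whose invertibility is precisely what \eqref{msA1} asserts. Hence each step of the recursion is uniquely solvable: given $\widehat{y}(t)$ one obtains $\widehat{y}(\sigma(t))$, and conversely given $\widehat{y}(\sigma(t))$ one recovers $\widehat{y}(t)$, so the map is a bijection at every scattered point. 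In the continuous (dense) regime $\mu(t)=0$, the system reduces to a linear ODE in $\widehat{y}$ with rd-continuous coefficients, where classical existence-uniqueness for time-scale dynamic equations applies.

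Finally I would assemble these local statements into a global one: starting from the prescribed initial vector \eqref{msic} at $t_0$, the unique forward solvability at each scattered point together with standard existence theory on the dense portions produces a unique $\widehat{y}$ on all of $[t_0,\infty)_\T$, and unwinding the shift recovers $y=(y_1,y_2)^\trans$ with $y_1$ and $y_2^\rho$ delta-differentiable, i.e.\ $y\in\D$ as in \eqref{Ddef}. The main obstacle I anticipate is the bookkeeping in the second step: correctly identifying which combinations of the blocks $B_1,\dots,B_4$ and the graininess $\mu$ collapse into the two invertible matrices of \eqref{msA1}, since the left-shift intertwines the $\sigma$ and $\rho$ operators and one must use $\sigma\circ\rho=\mathrm{id}$ carefully to avoid a spurious extra shift. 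Once that algebraic identification is pinned down, the invertibility hypothesis does all the remaining work and the result follows.
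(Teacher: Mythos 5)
Your proposal is correct and takes essentially the same route as the paper: the paper also uses the Sturmian identity $(y_2^\rho)^\sigma=y_2$ and the block forms to rewrite \eqref{maineq} as the explicit first-order system $\widehat{y}^\Delta=\mathcal{K}(\cdot,\lambda)\widehat{y}$, and then factors $I_{2n}+\mu\mathcal{K}(\cdot,\lambda)$ into two matrices whose invertibility is exactly \eqref{msA1} --- which is the regressivity of $\mathcal{K}$, i.e., precisely your observation that the one-step map $\widehat{y}(t)\mapsto\widehat{y}(\sigma(t))$ is a bijection at scattered points. Where you patch scattered and dense regimes by hand, the paper simply invokes the standard existence--uniqueness theorem for regressive linear dynamic systems \cite[Theorem 5.8]{bp1}, which handles the global assembly (including scattered points accumulating at dense ones) in one stroke.
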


\begin{proof}
Let $\widehat{y}$ be given as in \eqref{leftshift}. For any given $\lambda\in\C$, using \eqref{sturmfact} and the assumptions on the block forms of $J$, $A$, and $B$, we can rewrite \eqref{maineq} as the pair of $n$-vector equations
$$ \begin{cases} 
   &y_1^\Delta(t) = B_3(t)y_1(t) + \big(\lambda  A_2(t)+B_4(t)\big)(y_2^\rho)^\sigma(t), \\
   &(y_2^\rho)^\Delta(t) = -\big(\lambda  A_1(t)+B_1(t)\big)y_1(t) - B_2(t)(y_2^\rho)^\sigma(t). \end{cases} $$
Using the simple useful formula $y_2=(y_2^\rho)^\sigma=(y_2^\rho)+\mu(y_2^\rho)^\Delta$, we also have (suppressing the $t$)
\begin{equation}\label{ms1.9}
 \begin{cases} 
   &y_1^\Delta = \Big(B_3-\mu(\lambda A_2+B_4)E_2(\lambda A_1+B_1)\Big)y_1 + (\lambda A_2+B_4)E_2(y_2^\rho), \\
   &(y_2^\rho)^\Delta = -E_2\big(\lambda A_1+B_1\big)y_1 - E_2B_2(y_2^\rho), \end{cases}
\end{equation}
where we have taken $E_2$ as in \eqref{msA1}. Thus we may also view solutions $y=(y_1,y_2)^{\trans}$ of \eqref{maineq} and \eqref{ms1.9} via \eqref{leftshift} as solutions of 
\begin{equation}\label{delsym}
 \widehat{y}^\Delta(t)=\mathcal{K}(t,\lambda)\widehat{y}(t), \quad \mathcal{K}(\cdot,\lambda):=-J(\lambda A+B)H,
\end{equation}
where on $[t_0,\infty)_\T$ we use $E_2=(I_n+\mu B_2)^{-1}$ and
\begin{equation}\label{ms2.2}
 H:=\begin{pmatrix} I_n & 0_n \\ -\mu E_2(\lambda A_1+B_1) & E_2 \end{pmatrix}, \quad\text{with}\quad -J(\lambda A+B)=\begin{pmatrix} B_3 & \lambda A_2+B_4 \\ -(\lambda A_1+B_1) & -B_2 \end{pmatrix},
\end{equation}
since $E_2B_2=B_2E_2$. Directly from the definition of $\mathcal{K}(\cdot,\lambda)$ in \eqref{delsym} and \eqref{ms2.2} we have that
$$ I_{2n}+\mu\mathcal{K}(\cdot,\lambda) = \begin{pmatrix} I_n+\mu B_3 & \mu(\lambda A_2+B_4) \\ 0_n & I_n \end{pmatrix} \begin{pmatrix} I_n & 0_n \\ -\mu E_2(\lambda A_1+B_1) & E_2\end{pmatrix}, $$
so that $I_{2n}+\mu(t)\mathcal{K}(t,\lambda)$ is invertible by \eqref{msA1} and thus $\mathcal{K}(\cdot,\lambda)$ is regressive. By \cite[Theorem 5.8]{bp1}, the matrix equation $\widehat{y}^\Delta=\mathcal{K}(\cdot,\lambda)\widehat{y}$ with initial condition \eqref{msic} has a unique solution $\widehat{y}$. It follows that \eqref{maineq}, \eqref{leftshift} with initial condition \eqref{msic} has a unique solution $y\in\D$.
\end{proof}

% Integration by parts %

\begin{theorem}[Green's Formula]\label{ibp}
For $y,z\in\D$ and $a,b\in[t_0,\infty)_\T$ with $b>a$ we have
$$ \int_{a}^{b} \left[z^*J\widehat{y}^\Delta - \left(J\widehat{z}^\Delta\right)^*y \right](t)\Delta t 
         = \widehat{z}^*(b)J\widehat{y}(b) - \widehat{z}^*(a)J\widehat{y}(a). $$
\end{theorem}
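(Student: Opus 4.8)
The plan is to exhibit the integrand as an exact delta derivative, namely to show that the scalar-valued function $t\mapsto\widehat{z}^*(t)J\widehat{y}(t)$ satisfies
$$ \big(\widehat{z}^*J\widehat{y}\big)^\Delta = z^*J\widehat{y}^\Delta - \big(J\widehat{z}^\Delta\big)^*y $$
pointwise on $[t_0,\infty)_\T$. The claimed formula then follows immediately by integrating from $a$ to $b$ and applying the fundamental theorem of calculus for the delta integral, since $y,z\in\D$ guarantees that $\widehat{y}$, $\widehat{z}$, and hence their product, are delta differentiable.

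To establish the pointwise identity, first I would apply the time-scale product rule to $\widehat{z}^*J\widehat{y}$, treating $J$ as a constant matrix, to obtain
$$ \big(\widehat{z}^*J\widehat{y}\big)^\Delta = \big(\widehat{z}^\Delta\big)^*J\widehat{y}^\sigma + \widehat{z}^*J\widehat{y}^\Delta. $$
Next, using that $J$ has real entries with $J^*=J^{\trans}=-J$ (so $J$ is skew-Hermitian), I would rewrite the target integrand as $z^*J\widehat{y}^\Delta + \big(\widehat{z}^\Delta\big)^*Jy$. Comparing the two expressions, it then suffices to verify that the shift discrepancy cancels, i.e.
$$ \big(\widehat{z}^\Delta\big)^*J\big(\widehat{y}^\sigma - y\big) = \big(z - \widehat{z}\big)^*J\widehat{y}^\Delta. $$

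The heart of the argument---and the step I expect to be the main obstacle---is precisely this cancellation, which is exactly where the Sturmian condition \eqref{sturmfact} and the left-shift convention \eqref{leftshift} enter. Using $\sigma(\rho(t))=t$ one has $(y_2^\rho)^\sigma=y_2$, so $\widehat{y}^\sigma=(y_1^\sigma,y_2)^{\trans}$ differs from $y=(y_1,y_2)^{\trans}$ only in the first block, giving $\widehat{y}^\sigma - y=(\mu y_1^\Delta,0)^{\trans}$ via $y_1^\sigma=y_1+\mu y_1^\Delta$. Dually, the simple useful formula $y_2=(y_2^\rho)^\sigma=y_2^\rho+\mu(y_2^\rho)^\Delta$ applied to $z$ yields $z-\widehat{z}=(0,\mu(z_2^\rho)^\Delta)^{\trans}$, so the discrepancy lives only in the second block. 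A short block computation with $J=\left(\begin{smallmatrix} 0_n & -I_n \\ I_n & 0_n \end{smallmatrix}\right)$ then shows both sides of the displayed identity reduce to the same scalar $\mu\,\big((z_2^\rho)^\Delta\big)^* y_1^\Delta$, so they cancel. I would remark that when $\T=\R$ (so $\mu\equiv 0$) these correction terms vanish and the identity collapses to the classical continuous Green's formula, while the graininess terms are exactly what must be carried to handle scattered points; this bookkeeping, rather than any deep idea, is the only delicate part.
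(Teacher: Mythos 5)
Your proposal is correct and takes essentially the same route as the paper: both proofs establish the pointwise identity $\big(\widehat{z}^*J\widehat{y}\big)^\Delta = z^*J\widehat{y}^\Delta - \big(J\widehat{z}^\Delta\big)^*y$ and then invoke the fundamental theorem of calculus, the only difference being bookkeeping (the paper expands the integrand blockwise and regroups via the product rule into $\left[-(z_1^*y_2^\rho)+(z_2^{\rho*}y_1)\right]^\Delta$, while you apply the product rule to $\widehat{z}^*J\widehat{y}$ and cancel the two graininess corrections $\mu\big((z_2^\rho)^\Delta\big)^*y_1^\Delta$, which is the same algebra with $(y_2^\rho)^\sigma=y_2$ doing the work). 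One trivial nit: that last identity rests on $\rho(\sigma(t))=t$ rather than $\sigma(\rho(t))=t$, though both are part of \eqref{sturmfact}.
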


\begin{proof}
Using \eqref{sturmfact} as we expand out the integrand, we have (suppressing the variable $t$)
\begin{eqnarray*} 
 z^*J\widehat{y}^\Delta &=& -z_1^*(y_2^\rho)^\Delta + (z_2^{\rho})^{\sigma*}y_1^\Delta, \\
 \left(J\widehat{z}^\Delta\right)^*y &=& -(z_2^{\rho})^{\Delta*}y_1 + z_1^{\Delta*}(y_2^\rho)^\sigma, 
\end{eqnarray*}
so that when we subtract the second from the first, we obtain
\begin{eqnarray*}
 z^*J\widehat{y}^\Delta - \left(J\widehat{z}^\Delta\right)^*y 
 &=& -z_1^*(y_2^\rho)^\Delta + (z_2^{\rho})^{\sigma*}y_1^\Delta + (z_2^{\rho})^{\Delta*}y_1 - z_1^{\Delta*}(y_2^\rho)^\sigma \\
 &=& -(z_1^*y_2^\rho)^\Delta+(z_2^{\rho*}y_1)^\Delta \\
 &=& \left[\widehat{z}^*J\widehat{y}\right]^\Delta.
\end{eqnarray*}
The result follows from the fundamental theorem of calculus.
\end{proof}

If $y\in\D$ is a solution of \eqref{maineq}, then it is related to $\widehat{y}$ via
\begin{equation}\label{ms2.1}
 y(t) = H(t)\widehat{y}(t), \quad t\in[t_0,\infty)_\T,
\end{equation}
for $H$ given in \eqref{ms2.2}. As a result we may write \eqref{maineq} as the equivalent equation \eqref{delsym}, in other words as
\begin{equation}\label{ms2.3}
 J\widehat{y}^\Delta(t) = \big(\lambda A(t)+B(t)\big)H(t)\widehat{y}(t).
\end{equation}
Moreover, the formal adjoint of \eqref{maineq} takes the form
\begin{equation}\label{ms2.4}
 J\widehat{z}^\Delta(t)=\Big(\overline{\lambda} A(t)+B^*(t)\Big)z(t), \quad \lambda\in\C, \quad t\in[t_0,\infty)_\T;
\end{equation}
under the hypotheses of Theorem $\ref{thmeu}$, an existence and uniqueness result holds for \eqref{ms2.4}. A solution $z$ of \eqref{ms2.4} is related to $\widehat{z}$ by the transformation
\begin{equation}\label{ms2.5}
 z(t) = \widetilde{H}(t)\widehat{z}^{\;\sigma}(t), \quad t\in[t_0,\infty)_\T, 
\end{equation}
where on $[t_0,\infty)_\T$ we have taken $E_2$ as in \eqref{msA1} and
\begin{equation}\label{Htilde}
 \widetilde{H}:=\begin{pmatrix} E_2^* & -\mu E_2^*\Big(\overline{\lambda} A_2+B_4^*\Big) \\ 0_n & I_n \end{pmatrix}, \quad \quad \widetilde{H}^{-1}=\begin{pmatrix} I+\mu B_2^* & \mu\Big(\overline{\lambda} A_2+B_4^*\Big) \\ 0_n & I_n \end{pmatrix}. 
\end{equation}
It follows for $\lambda\in\C$ and $t\in[t_0,\infty)_\T$ that \eqref{ms2.4} is equivalent to the equation
\begin{equation}\label{ms2.6}
 J\widehat{z}^\Delta(t) = \Big(\overline{\lambda} A(t)+B^*(t)\Big)\widetilde{H}(t)\widehat{z}^{\;\sigma}(t)=H^*(t)\Big(\overline{\lambda} A(t)+B^*(t)\Big)\widehat{z}^{\;\sigma}(t).
\end{equation}

% Definition %

\begin{definition}
A matrix $\widehat{Y}:[t_0,\infty)_\T\rightarrow\C^{2n,2n}$ is called a fundamental system of \eqref{ms2.3} if and only if its columns are linearly independent solutions of \eqref{ms2.3}, if and only if $\widehat{Y}(t)$ has rank $2n$ for some $t\in[t_0,\infty)_\T$. In this case the columns of $Y(t)=H(t)\widehat{Y}(t)$ form a fundamental system (system of linearly independent solutions) of \eqref{maineq}, where $H$ is given in \eqref{ms2.2}.
\end{definition}

% Remark %

\begin{remark}\label{rmk1.4}
In the development below, let the matrix $\widehat{Y}(t)=\left(\widehat{\theta}(t)|\widehat{\phi}(t)\right)$ for $t\in[t_0,\infty)_\T$ be the fundamental system of \eqref{ms2.3} satisfying the initial condition
\begin{equation}\label{ms2.7}
 \widehat{Y}(t_0)=J.
\end{equation}
It follows from \eqref{ms2.1} that $\widehat{Y}(t)=\left(\widehat{\theta}(t)|\widehat{\phi}(t)\right)$ for $t\in[t_0,\infty)_\T$ is the fundamental system of \eqref{maineq} satisfying
$$ Y(t_0) = H(t_0)J = \begin{pmatrix} 0_n & -I_n \\ \Big(I+\mu(t_0)B_2(t_0)\Big)^{-1} & \mu(t_0)\Big(I+\mu(t_0)B_2(t_0)\Big)^{-1}\Big(\lambda A_1(t_0)+B_1(t_0)\Big) \end{pmatrix}. $$
In a similar fashion take $\widehat{Z}(t)=\left(\widehat{\eta}(t)|\widehat{\chi}(t)\right)$ for $t\in[t_0,\infty)_\T$ to be the fundamental system of \eqref{ms2.6} satisfying $\widehat{Z}(t_0)=J$, so that by \eqref{ms2.5} and \eqref{ms2.6} we have $Z(t_0)=[\mu J(\overline{\lambda}A+B^*)+\widetilde{H}^{-1}]^{-1}J$, i.e.
$$ Z(t_0) = \begin{pmatrix} 0_n & -I_n \\ \Big(I_n+\mu(t_0)B_3^*(t_0)\Big)^{-1} & \mu(t_0)\Big(I_n+\mu(t_0)B_3^*(t_0)\Big)^{-1}\Big(\overline{\lambda}A_1(t_0)+B_1^*(t_0)\Big) \end{pmatrix}. $$
\end{remark} 

% Lemma % 

\begin{lemma}\label{mslem2.4}
For the fundamental systems $\widehat{Y}$ and $\widehat{Z}$ of \eqref{ms2.3} and \eqref{ms2.6}, respectively, the equality 
$$ \widehat{Z}(t)=-J(\widehat{Y}^{-1})^*(t)J $$ holds for all $t\in[t_0,\infty)_\T$.
\end{lemma}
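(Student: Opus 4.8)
The plan is to prove the identity by a constancy-of-bracket (Wronskian-type) argument rather than by verifying a dynamic equation for the candidate matrix directly. Set $G(t):=\widehat{Z}^*(t)\,J\,\widehat{Y}(t)$. I will show $G^\Delta\equiv 0$ on $[t_0,\infty)_\T$, evaluate $G(t_0)$, and then solve the resulting algebraic relation for $\widehat{Z}$.

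For the constancy, I would differentiate $G$ with the time-scale product rule in the form $(uv)^\Delta=u^\Delta v+u^\sigma v^\Delta$, taking $u=\widehat{Z}^*$ and $v=J\widehat{Y}$ (with $J$ constant), which gives $G^\Delta=(\widehat{Z}^\Delta)^*J\widehat{Y}+(\widehat{Z}^\sigma)^*J\widehat{Y}^\Delta$; here I use $(\widehat{Z}^*)^\Delta=(\widehat{Z}^\Delta)^*$ and $(\widehat{Z}^*)^\sigma=(\widehat{Z}^\sigma)^*$, valid because delta differentiation and the shift act entrywise and commute with conjugate transposition. Into the second term I substitute $J\widehat{Y}^\Delta=(\lambda A+B)H\widehat{Y}$ from \eqref{ms2.3}. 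For the first term I take the conjugate transpose of \eqref{ms2.6}, namely $(\widehat{Z}^\Delta)^*J^*=(\widehat{Z}^\sigma)^*(\overline{\lambda}A+B^*)^*H$; using that $A$ is Hermitian so $(\overline{\lambda}A+B^*)^*=\lambda A+B$, together with $J^*=-J$, this becomes $(\widehat{Z}^\Delta)^*J=-(\widehat{Z}^\sigma)^*(\lambda A+B)H$. The two contributions to $G^\Delta$ are then $-(\widehat{Z}^\sigma)^*(\lambda A+B)H\widehat{Y}$ and $+(\widehat{Z}^\sigma)^*(\lambda A+B)H\widehat{Y}$, which cancel, so $G^\Delta\equiv 0$.

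It remains to pin down the constant and invert. Using $\widehat{Y}(t_0)=\widehat{Z}(t_0)=J$ from \eqref{ms2.7} and Remark \ref{rmk1.4}, together with $J^2=-I_{2n}$ and $J^*=-J$, I compute $G(t_0)=J^*\,J\,J=J$, so $\widehat{Z}^*(t)\,J\,\widehat{Y}(t)=J$ for all $t\in[t_0,\infty)_\T$. Since $\widehat{Y}$ is a fundamental system it is invertible, so right-multiplying by $\widehat{Y}^{-1}$ and then by $J^{-1}$ gives $\widehat{Z}^*=J\widehat{Y}^{-1}J^{-1}$; taking conjugate transposes and using $(J^{-1})^*=J$ and $J^*=-J$ (both following from $J^{-1}=-J$) yields $\widehat{Z}=-J(\widehat{Y}^{-1})^*J$, as claimed.

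The main point requiring care is the shift bookkeeping: the cancellation succeeds precisely because I choose the product-rule variant whose $\sigma$ lands on $\widehat{Z}$, matching the $\widehat{z}^\sigma$ built into the adjoint equation \eqref{ms2.6}, and because the placement of $H$, $H^*$ and the Hermitian symmetry $A^*=A$ force the two coefficient blocks to coincide. An alternative would be to set $W:=-J(\widehat{Y}^{-1})^*J$, verify directly that it solves \eqref{ms2.6} with $W(t_0)=J$, and invoke the uniqueness furnished by Theorem \ref{thmeu}; this route needs the delta-derivative-of-inverse identity $(\widehat{Y}^{-1})^\Delta=-\widehat{Y}^{-1}\widehat{Y}^\Delta(\widehat{Y}^\sigma)^{-1}$, which is messier, so I would prefer the bracket computation above.
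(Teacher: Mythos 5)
Your proof is correct, and it takes a genuinely different route from the paper's. The paper argues by verification plus uniqueness: it sets $\widehat{U}:=-J(\widehat{Y}^{-1})^*J$, checks $\widehat{U}(t_0)=J$, differentiates via the time-scale inverse formula $(\widehat{Y}^{-1})^\Delta=-(\widehat{Y}^{\sigma})^{-1}\widehat{Y}^\Delta\widehat{Y}^{-1}$ together with \eqref{ms2.3} to show that $\widehat{U}$ solves \eqref{ms2.6}, and concludes $\widehat{U}=\widehat{Z}$ by uniqueness --- precisely the alternative you mention and decline at the end. Your bracket argument instead proves $(\widehat{Z}^*J\widehat{Y})^\Delta\equiv 0$, and the computation is sound: choosing the product-rule variant $(uv)^\Delta=u^\Delta v+u^\sigma v^\Delta$ puts the shift on $\widehat{Z}$, matching the $\widehat{z}^{\,\sigma}$ built into \eqref{ms2.6}; since $A^*=A$ gives $(\overline{\lambda}A+B^*)^*=\lambda A+B$ and $J^*=-J$, the two terms cancel exactly, the constant evaluates to $J^*JJ=J$, and the final algebraic inversion using $J^{-1}=-J$ is right. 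As to what each approach buys: your route avoids the derivative-of-inverse identity, needs only existence (not uniqueness) of $\widehat{Z}$, and isolates the invariant $\widehat{Z}^*(t)J\widehat{Y}(t)\equiv J$, which is exactly the relation the paper later exploits (e.g.\ in deriving \eqref{ms5.6}); the paper's route is more mechanical but leans directly on the uniqueness machinery of Theorem \ref{thmeu} that it has already built. One small point you should make explicit: solving $\widehat{Z}^*J\widehat{Y}=J$ for $\widehat{Z}$ requires $\widehat{Y}(t)$ to be invertible at \emph{every} $t$, which does not follow from the definition of a fundamental system alone (rank $2n$ at some $t$) but from the regressivity of $\mathcal{K}(\cdot,\lambda)$ established in the proof of Theorem \ref{thmeu} --- though the paper's candidate $\widehat{U}=-J(\widehat{Y}^{-1})^*J$ presupposes the same fact, so this is a shared, not an extra, hypothesis.
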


\begin{proof}
Let $\widehat{U}(t):=-J(\widehat{Y}^{-1})^*(t)J$ for all $t\in[t_0,\infty)_\T$. As $\widehat{Y}(t_0)=J$, we see that
$$ \widehat{U}(t_0) = -J(\widehat{Y}^{-1})^*(t_0)J = -JJJ = J = \widehat{Z}(t_0). $$
If we can show that $\widehat{U}$ solves \eqref{ms2.6}, then by uniqueness we will have $\widehat{U}=\widehat{Z}$. By the product rule,
$$ 0_{2n} = \left(\widehat{Y}^{-1}\widehat{Y}\right)^\Delta(t) = (\widehat{Y}^{-1})^{\sigma}(t)\widehat{Y}^\Delta(t) + (\widehat{Y}^{-1})^{\Delta}(t)\widehat{Y}(t); $$
from this and \eqref{ms2.3} we have that
$$ (\widehat{Y}^{-1})^{\Delta}(t) = -(\widehat{Y}^{\sigma})^{-1}(t)\widehat{Y}^\Delta(t)\widehat{Y}^{-1}(t) = (\widehat{Y}^{\sigma})^{-1}(t)J\big(\lambda A(t)+B(t)\big)H(t). $$
Putting it all together we get
\begin{eqnarray*}
  J\widehat{U}^\Delta(t) &=& J\left(-J(\widehat{Y}^{-1})^*J\right)^\Delta(t) = (\widehat{Y}^{-1})^{*\Delta}(t)J = (\widehat{Y}^{-1})^{\Delta*}(t)J \\
  &=& H^*(t)\big(\lambda A(t)+B(t)\big)^*J^*(\widehat{Y}^{\sigma})^{-1*}(t)J \\
  &=& H^*(t)\big(\lambda A(t)+B(t)\big)^*\widehat{U}^{\sigma}(t),
\end{eqnarray*}
so that $\widehat{U}$ solves \eqref{ms2.6}. Thus $\widehat{Z}(t)=\widehat{U}(t)=-J(\widehat{Y}^{-1})^*(t)J$ for all $t\in[t_0,\infty)_\T$.
\end{proof}

Throughout the paper, we will exhibit explicit dependence on $\lambda$ only when necessary. We end this section with the following remark. 

% Remark %

\begin{remark}
As seen above in Theorems $\ref{thmeu}$ and $\ref{ibp}$, the assumption that $\T$ is Sturmian plays a decisive role in securing existence and uniqueness of solutions, and an integration by parts formula. In this remark, we show that there is no getting around this assumption. For example, instead of system \eqref{maineq}, consider the alternative system
\begin{equation}\label{alteq}
 Jy^\nabla(t)=\Big(\lambda A(t)+B(t)\Big)\widehat{y}(t), \quad t\in[t_0,\infty)_\T, \quad J=\begin{pmatrix} 0_n & -I_n \\ I_n & 0_n \end{pmatrix},
\end{equation}
where we have $\widehat{y}$ on the right-hand side, the nabla derivative is used on the left, and we assume
\begin{equation}\label{msA1alt}
  E_2'(t):=\Big(I_n-\nu(t) B_2(t)\Big)^{-1} \quad\text{and}\quad \Big(I_n-\nu(t) B_3(t)\Big)^{-1} \quad\text{exist}
\end{equation} 
in place of \eqref{msA1}. System \eqref{alteq} may also be viewed as a generalization of \eqref{ms1.2} and \eqref{ms1.3}. As in \eqref{delsym}, we can rewrite \eqref{alteq} as the system 
\begin{equation}\label{nabsym}
 y^\nabla(t)=\mathcal{K}_1(t,\lambda)y(t),  \quad \mathcal{K}_1(\cdot,\lambda):=-J(\lambda A+B)H_1, \quad  H_1:=\begin{pmatrix} I_n & 0_n \\ \nu E_2'(\lambda A_1+B_1) & E_2' \end{pmatrix}.
\end{equation}
Directly from the definition of $\mathcal{K}_1(\cdot,\lambda)$ in \eqref{nabsym} we have that
$$ I_{2n}-\nu\mathcal{K}_1(\cdot,\lambda) = \begin{pmatrix} I_n-\nu B_3 & -\nu(\lambda A_2+B_4) \\ 0_n & I_n \end{pmatrix} H_1, $$
so that $I_{2n}-\nu(t)\mathcal{K}_1(t,\lambda)$ is invertible by \eqref{msA1alt}, $\mathcal{K}_1(\cdot,\lambda)$ is $\nu$-regressive, and the matrix equation $y^\nabla=\mathcal{K}_1(\cdot,\lambda)y$ with proper initial condition has a unique solution $y$. It follows that \eqref{alteq} has a unique solution. Additionally, we have the integration by parts formula (Green's formula)
$$ \int_{a}^{b} \left[\widehat{z}^*Jy^\nabla - \left(Jz^\nabla\right)^*\widehat{y} \right](t)\nabla t=z^*(b)Jy(b)-z^*(a)Jy(a). $$
To this point in the remark the analysis is valid on general time scales. However, the scalar product in \eqref{inprod} is now replaced by
\begin{equation}\label{altprod}
 (x,y)_A:=\int_{t_0}^{\infty}\widehat{y}^*(t)A(t)\widehat{x}(t)\nabla t, \quad x,y\in L_A^2(t_0,\infty)_\T. 
\end{equation}
To show that this defines an inner product on $L_A^2(t_0,\infty)_\T$, we would need the hat operator $\widehat{\cdot}$ in \eqref{leftshift} to be invertible, which is only possible if \eqref{sturmfact} holds, i.e. on Sturmian time scales. In summary, to unify \eqref{ms1.2} and \eqref{ms1.3} on time scales, systems equivalent to \eqref{maineq} or \eqref{alteq} must be used to account for the shifts in the discrete case \cite{ms}. For those systems to admit the existence and uniqueness of solutions to initial value problems, an integration by parts formula, and a matrix weighted scalar product in a Hilbert space, the Sturmian assumption \eqref{sturmfact} must used somewhere.
\end{remark}

%%%%%%%%%%%%%%%%%%%%%%%%%%%%%%%%%%%%
%                                  %
% Weyl-Sims nesting sets Section 3 %
%                                  %
%%%%%%%%%%%%%%%%%%%%%%%%%%%%%%%%%%%%

\section{Weyl-Sims nesting sets}\label{WSnest}

Weyl-Sims sets $D(t,\lambda)$, $t\in[t_0,\infty)_\T$, are the analogue of the classical Weyl circles. Here the spectral parameter $\lambda$ varies in a set $\Lambda(\lambda_0,\mathscr{U}_{2n})\subset\C$, a cone-shaped set defined to parallel the construction in \cite{b2}, which takes the role of Sims' rotated half-planes. The class of matrices $\mathscr{U}_{2n}$ is one of many possible describing this rotation. The key result is the nesting property of the Weyl-Sims sets (see Theorem \ref{msthm3.2} below). It will follow that there is a limit set $D(\infty,\lambda)$ with the property that for any $l\in D(\infty,\lambda)$, the Weyl solution $\psi=\theta+\phi l$ is square integrable with respect to a certain matrix weight function $W$, while an analogous statement holds for the adjoint equation. We will conclude this section by noting conditions which imply $A-$square integrability of the Weyl solution. 

To begin, choose $\mathscr{U}\in\C^{n,n}$ invertible and define
\begin{equation}\label{Udef}
 \mathscr{U}_{2n} = \begin{pmatrix} \mathscr{U} & 0_n \\ 0_n & -\mathscr{U}^* \end{pmatrix}. 
\end{equation}
Note that $\mathscr{U}_{2n}J$ is Hermitian and has precisely $n$ positive and $n$ negative eigenvalues. Indeed, if $(\lambda,v)$ is an eigenpair associated with $\mathscr{U}_{2n}J$, then 
$$ (-\lambda,w), \quad\text{where}\quad w = \begin{pmatrix} 0_n  & -\mathscr{U} \\ \mathscr{U}^* & 0_n \end{pmatrix}v, $$
is also an eigenpair associated with $\mathscr{U}_{2n}J$. Moreover, since $J\mathscr{U}_{2n}^*=-\mathscr{U}_{2n}J$, a use of Theorem \ref{ibp}, \eqref{ms2.1} and \eqref{ms2.3} yields that the fundamental system $\widehat{Y}=\left(\widehat{\theta}|\widehat{\phi}\right)$ satisfies
\begin{eqnarray}
 \widehat{Y}^*(t)\mathscr{U}_{2n}J\widehat{Y}(t) - \widehat{Y}^*(t_0)\mathscr{U}_{2n}J\widehat{Y}(t_0)
 &=& \int_{t_0}^{t} \left\{Y^*\left(\mathscr{U}_{2n}J\widehat{Y}^\Delta\right)+\left(\mathscr{U}_{2n}J\widehat{Y}^\Delta\right)^*Y\right\}(s)\Delta s \nonumber \\
 &=& \int_{t_0}^{t} Y^*\left\{\mathscr{U}_{2n}(\lambda A+B)+\left[\mathscr{U}_{2n}(\lambda A+B)\right]^*\right\}Y(s)\Delta s \nonumber \\
 &=& 2 \int_{t_0}^{t} Y^*(s)W(s,\lambda)Y(s)\Delta s, \label{ms3.1}
\end{eqnarray}
where 
\begin{equation}\label{Wdef}
 W(t,\lambda):=\re\left[\mathscr{U}_{2n}(\lambda A(t)+B(t))\right].
\end{equation}
Note that by using the notation $W(t,\lambda)$ in \eqref{Wdef} we mimic the discrete case \cite[(3.1)]{ms}, which uses $W_k(\lambda)$ for $k\in\N$; in the continuous case \cite[(3.5)]{b2} they use the notation $C_{\lambda}(x)$ for $x\in\R$. For $M\in\C^{n,n}$ we take as its norm $\|M\|$ the largest eigenvalue of $(M^*M)^{1/2}$, and define its real and imaginary parts as
\begin{equation}\label{realpart}
 \re[M]=\frac{1}{2}(M+M^*) \quad\text{and}\quad \im[M]=\frac{1}{2i}(M-M^*). 
\end{equation}

% Definition %

\begin{definition}
Let $\lambda_0\in\C$, and let $\mathscr{U}_{2n}$ be given as above in \eqref{Udef}. Then $(\lambda_0,\mathscr{U}_{2n})$ is called an admissible pair for \eqref{maineq}, and denoted $(\lambda_0,\mathscr{U}_{2n})\in\mathscr{S}$, if and only if
\begin{equation}\label{ms3.2}
 W(t,\lambda_0) = \re\left[\mathscr{U}_{2n}\big(\lambda_0 A(t)+B(t)\big)\right] \ge 0, \quad t\in[t_0,\infty)_\T.
\end{equation}
In this case, we define the set
\begin{equation}\label{ms3.3}
  \Lambda(\lambda_0,\mathscr{U}_{2n}):=\left\{\lambda\in\C: \text{for some} \; \delta > 0, \re[(\lambda-\lambda_0)\mathscr{U}_{2n}A(t)]\ge \delta\mathscr{U}_{2n}A(t)\mathscr{U}_{2n}^*\;\forall\;t\in[t_0,\infty)_\T \right\}.
\end{equation}
Then the Weyl-Sims sets for \eqref{maineq} are defined for $\lambda\in\Lambda(\lambda_0,\mathscr{U}_{2n})$ and $\widehat{Y}=\left(\widehat{\theta}|\widehat{\phi}\right)$ as
\begin{equation}\label{ms3.4}
 D(t,\lambda):=\left\{l\in\C^{n,n}:\left(\widehat{\theta}(t)+\widehat{\phi}(t)l\right)^*\mathscr{U}_{2n}J\left(\widehat{\theta}(t)+\widehat{\phi}(t)l\right)\le 0 \right\}.
\end{equation}
\end{definition}
\noindent Prior to proving the nesting property of the Weyl-Sims sets (see Theorem \ref{msthm3.2} below), we need the following. Since
\begin{equation}\label{ms3.5}
 W(t,\lambda) \ge \delta \mathscr{U}_{2n}A(t)\mathscr{U}_{2n}^* \ge 0 \quad\text{for}\quad t\in[t_0,\infty)_\T \quad\text{and}\quad \lambda\in\Lambda(\lambda_0,\mathscr{U}_{2n}),
\end{equation}
with $A\ge 0$, in addition to \eqref{ms3.2} we require the following definiteness condition: for any $\lambda\in\Lambda(\lambda_0,\mathscr{U}_{2n})$ and $\zeta\in\C^n$, 
\begin{equation}\label{bep3.14}
 W(t,\lambda)\phi(t)\zeta=0 \quad\text{for all}\quad t\in[t_0,\infty)_\T \quad\implies\quad \zeta=0. 
\end{equation}
Setting 
$$ \widehat{\theta}(t)=\begin{pmatrix} \theta_1(t) \\ \theta_2^\rho(t) \end{pmatrix} \quad\text{and}\quad \widehat{\phi}(t)=\begin{pmatrix} \phi_1(t) \\ \phi_2^\rho(t) \end{pmatrix}, $$
where the blocks $\theta_1(t)$, $\theta_2(t)$, $\phi_1(t)$, $\phi_2(t)$ are $\C^{n,n}-$valued matrices, we write
\begin{equation}\label{ms3.6}
 \widehat{Y}^*(t)\mathscr{U}_{2n}J\widehat{Y}(t) = 2\begin{pmatrix} S(t) & T(t) \\ T^*(t) & P(t) \end{pmatrix},
\end{equation}
where
\begin{eqnarray*}
 S(t,\lambda) &=& -\re\left[\theta_1^*(t)\mathscr{U}\theta_2^\rho(t)\right], \\
 T(t,\lambda) &=& -\frac{1}{2}\left[\theta_1^*(t)\mathscr{U}\phi_2^\rho(t) + \theta_2^{\rho*}(t)\mathscr{U}^*\phi_1(t)\right], \\
 P(t,\lambda) &=& -\re\left[\phi_1^*(t)\mathscr{U}\phi_2^\rho(t)\right].
\end{eqnarray*}
From the initial condition in \eqref{ms2.7} we see that $P(t_0,\lambda)=0$. We will show in Lemma \ref{mslem3.1} below that \eqref{bep3.14} implies $P(t,\lambda)>0$ for $t\in\T$ sufficiently large. For $t\ge t_1=t_1(\lambda)$ given below in Lemma \ref{mslem3.1}, we employ the notation
\begin{equation}\label{circrad}
 \mathscr{C}(t,\lambda):=-\left(P^{-1}T^*\right)(t,\lambda), \quad \mathscr{R}(t,\lambda):=\left(TP^{-1}T^*-S\right)(t,\lambda). 
\end{equation}
The proof of the following lemma is the same, with slight modifications, as in the discrete \cite[Lemma 3.1]{ms} and continuous \cite[Lemma 3.5]{b2} cases. 

% Lemma %

\begin{lemma}\label{mslem3.1}
For $\lambda\in\Lambda(\lambda_0,\mathscr{U}_{2n})$ for some $(\lambda_0,\mathscr{U}_{2n})\in\mathscr{S}$, there exists some $t_1=t_1(\lambda)\in[t_0,\infty)_\T$ such that
\begin{enumerate}
 \item $P(t,\lambda)$ is non-decreasing in $t$, $P(t,\lambda)\ge 0$, and, for $t\ge t_1$, $P(t,\lambda) > 0$,
 \item $D(t,\lambda)\ne \emptyset$ for $t\ge t_1$,
 \item for $t\in[t_1,\infty)_\T$, $\mathscr{R}(t,\lambda)$ is non-increasing in $t$, and $\mathscr{R}(t,\lambda)>0$.
\end{enumerate}
\end{lemma}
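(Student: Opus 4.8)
The plan is to extract all three assertions from the single integral identity \eqref{ms3.1}, applied once to the whole fundamental system $\widehat Y$ and once to an individual Weyl solution $\psi_l=\theta+\phi l$, and then to read them off the block form \eqref{ms3.6}.

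For part (i), I would take the lower-right $n\times n$ block of \eqref{ms3.1}. Writing $Y=H\widehat Y=(\theta\mid\phi)$, the $(2,2)$-block of $Y^*WY$ is $\phi^*W\phi$ while that of $\widehat Y^*\mathscr{U}_{2n}J\widehat Y$ is $2P$ by \eqref{ms3.6}; since $P(t_0,\lambda)=0$ this yields $P(t,\lambda)=\int_{t_0}^{t}\phi^*(s)W(s,\lambda)\phi(s)\,\Delta s$. Because $W\ge 0$ on $\Lambda(\lambda_0,\mathscr{U}_{2n})$ by \eqref{ms3.5}, the integrand is positive semi-definite, so $P$ is non-decreasing and $P\ge 0$ at once. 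The genuinely non-algebraic step, which I expect to be the main obstacle, is the strict positivity $P(t_1,\lambda)>0$. I would argue by contradiction and compactness: if no $t_1$ works, choose $t_k\to\infty$ and unit vectors $\zeta_k$ with $\zeta_k^*P(t_k,\lambda)\zeta_k=0$; monotonicity of $P$ forces $\zeta_k^*P(s,\lambda)\zeta_k=0$ for all $s\le t_k$, and passing to a convergent subsequence $\zeta_k\to\zeta$ on the unit sphere gives $\zeta^*P(s,\lambda)\zeta=\int_{t_0}^{s}|W^{1/2}\phi\zeta|^2\Delta t=0$ for every $s$. Since the rd-continuous, non-negative integrand must then vanish at every point (at right-scattered $t$ the delta integral picks up $\mu(t)\,|W^{1/2}\phi\zeta|^2(t)$), one gets $W(t,\lambda)\phi(t)\zeta=0$ for all $t$, whence $\zeta=0$ by the definiteness hypothesis \eqref{bep3.14}, contradicting $|\zeta|=1$. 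This is exactly where the value $t_1=t_1(\lambda)$ is produced, and then $P(t,\lambda)\ge P(t_1,\lambda)>0$ for $t\ge t_1$.

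With $P>0$ on $[t_1,\infty)_\T$, I would complete the square in $\tfrac12\widehat Y^*\mathscr{U}_{2n}J\widehat Y$: for $\widehat\psi=\widehat\theta+\widehat\phi l=\widehat Y\left(\begin{smallmatrix}I\\ l\end{smallmatrix}\right)$,
\[
 \tfrac12\,\widehat\psi^*\mathscr{U}_{2n}J\widehat\psi=\bigl(l-\mathscr{C}\bigr)^*P\bigl(l-\mathscr{C}\bigr)-\mathscr{R},
\]
with $\mathscr{C},\mathscr{R}$ as in \eqref{circrad}. Part (ii) is then immediate on $[t_1,\infty)_\T$ once $\mathscr{R}>0$ is known: taking $l=\mathscr{C}(t,\lambda)$ makes the left side $-\mathscr{R}\le 0$, so $\mathscr{C}(t,\lambda)\in D(t,\lambda)$ and $D(t,\lambda)\ne\emptyset$. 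For the positivity $\mathscr{R}>0$ in part (iii) I would invoke inertia: $\mathscr{U}_{2n}J$ is Hermitian with signature $(n,n)$ (noted after \eqref{Udef}) and $\widehat Y(t)$ is invertible, so by Sylvester's law of inertia the congruent matrix $\widehat Y^*\mathscr{U}_{2n}J\widehat Y$, hence $\bigl(\begin{smallmatrix}S&T\\ T^*&P\end{smallmatrix}\bigr)$, also has signature $(n,n)$; with $P>0$ contributing precisely $n$ positive eigenvalues, the Schur complement $S-TP^{-1}T^*$ is negative definite, i.e. $\mathscr{R}=TP^{-1}T^*-S>0$.

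Finally, for the monotonicity of $\mathscr{R}$ I would record the master identity obtained by applying \eqref{ms3.1} to a single Weyl solution $\psi_l=\theta+\phi l$ and evaluating the boundary term at $t_0$ via \eqref{ms2.7} (a short computation giving $\widehat\psi^*(t_0)\mathscr{U}_{2n}J\widehat\psi(t_0)=2\re[\mathscr{U}^*l]$): for every $l$ and $t$,
\[
 \bigl(l-\mathscr{C}(t)\bigr)^*P(t)\bigl(l-\mathscr{C}(t)\bigr)-\mathscr{R}(t)=\re[\mathscr{U}^*l]+\int_{t_0}^{t}\psi_l^*W\psi_l\,\Delta s.
\]
Fixing $t'\ge t\ge t_1$ and taking $l=\mathscr{C}(t',\lambda)$, I subtract the identity at time $t'$ (where the quadratic term vanishes) from the identity at time $t$; the $\re[\mathscr{U}^*l]$ terms cancel and I obtain
\[
 \mathscr{R}(t',\lambda)-\mathscr{R}(t,\lambda)=-\int_{t}^{t'}\psi^*W\psi\,\Delta s-\bigl(\mathscr{C}(t',\lambda)-\mathscr{C}(t,\lambda)\bigr)^*P(t,\lambda)\bigl(\mathscr{C}(t',\lambda)-\mathscr{C}(t,\lambda)\bigr),
\]
where $\psi=\theta+\phi\,\mathscr{C}(t',\lambda)$. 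Both terms on the right are negative semi-definite because $W\ge 0$ and $P(t,\lambda)\ge 0$, so $\mathscr{R}(t',\lambda)\le\mathscr{R}(t,\lambda)$, proving that $\mathscr{R}$ is non-increasing. Apart from the compactness argument in part (i), the only care needed is the bookkeeping of the $t_0$ boundary contribution and the sign conventions in completing the square, which follow the discrete \cite{ms} and continuous \cite{b2} templates.
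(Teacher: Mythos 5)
Your proof is correct and follows essentially the same route as the paper, which itself omits the argument and defers to \cite[Lemma 3.5]{b2} and \cite[Lemma 3.1]{ms}: monotonicity and nonnegativity of $P(t,\lambda)$ from the $(2,2)$-block of the integral identity \eqref{ms3.1}, strict positivity for large $t$ via the definiteness condition \eqref{bep3.14}, positivity of $\mathscr{R}(t,\lambda)$ by congruence/Schur-complement inertia since $\mathscr{U}_{2n}J$ has signature $(n,n)$ and $\widehat{Y}(t)$ is invertible, and monotonicity of $\mathscr{R}$ by evaluating the completed-square identity at $l=\mathscr{C}(t',\lambda)$ and subtracting. The only cosmetic deviation is your compactness argument on the unit sphere for producing $t_1(\lambda)$, where the cited proofs instead use that the null spaces of the monotone family $P(t,\lambda)$ are nested finite-dimensional subspaces and hence stabilize; both versions are sound, and your time-scale bookkeeping (vanishing of the nonnegative rd-continuous integrand at right-dense and right-scattered points) is exactly the "slight modification" the paper alludes to.
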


If we then multiply \eqref{ms3.6} on the left by $(I_n,l^*)$, and on the right by $\left(\begin{smallmatrix} I_n \\ l \end{smallmatrix}\right)$, we get the expression
$$ \left(\widehat{\theta}+\widehat{\phi}\;l\right)^*(t) \mathscr{U}_{2n}J \left(\widehat{\theta}+\widehat{\phi}\;l\right)(t) = 2 \left[l^*Pl+Tl+l^*T^*+S\right](t), \quad t\in[t_0,\infty)_\T. $$
It follows that
\begin{equation}\label{ms3.7}
 D(t,\lambda) = \left\{l\in\C^{n,n}: (l-\mathscr{C}(t,\lambda))^*P(t,\lambda)(l-\mathscr{C}(t,\lambda)) \le \mathscr{R}(t,\lambda)\right\}.
\end{equation}
Again as in both the discrete and continuous cases (see \cite[(3.7)]{ms} and \cite[Lemma 3.5(iii)]{b2}), we have that $\mathscr{R}(t,\lambda)\ge 0$ with $\mathscr{R}(t,\lambda) > 0$ for $t\ge t_1(\lambda)$, so that
\begin{equation}\label{ms3.8}
 D(t,\lambda) = \left\{l\in\C^{n,n}: l=\mathscr{C}(t,\lambda)+P^{-1/2}(t,\lambda)V\mathscr{R}^{1/2}(t,\lambda)\;\text{for some}\; V\in\C^{n,n}\; \text{with}\; V^*V\le I_n\right\},
\end{equation}
where $V$ can be taken as $V=P^{1/2}(t,\lambda)(l-\mathscr{C}(t,\lambda))\mathscr{R}^{-1/2}(t,\lambda)$.

% Theorem %

\begin{theorem}\label{msthm3.2}
For $\lambda\in\Lambda(\lambda_0,\mathscr{U}_{2n})$ for some $(\lambda_0,\mathscr{U}_{2n})\in\mathscr{S}$, we have
\begin{enumerate}
 \item $D(t,\lambda)\subseteq D(\tau,\lambda)$ for $t,\tau\in[t_0,\infty)_\T$ with $t>\tau$,
 \item $D(t,\lambda)-\mathscr{C}(t,\lambda) \subseteq D(\tau,\lambda)-\mathscr{C}(\tau,\lambda)$ for $t,\tau\in[t_0,\infty)_\T$ with $t>\tau$, 
 \item $D(t,\lambda)$ is compact and convex for $t\in[t_1(\lambda),\infty)_\T$,
 \item $\mathscr{C}(\infty,\lambda):=\displaystyle\lim_{t\rightarrow\infty}\mathscr{C}(t,\lambda)$ exists,
 \item the equality \begin{equation}\label{bep3.36} \bigcap_{t\in[t_1,\infty)_\T} \left[D(t,\lambda)-\mathscr{C}(t,\lambda)\right] = D(\infty,\lambda)-\mathscr{C}(\infty,\lambda)\end{equation} holds, where $D(\infty,\lambda):=\cap_{t\in[t_1,\infty)_\T}D(t,\lambda)$,
 \item $\mathscr{C}(\infty,\lambda)\in D(\infty,\lambda)$.
\end{enumerate}
\end{theorem}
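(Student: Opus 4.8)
The plan is to obtain part (vi) as an immediate consequence of part (v), using the elementary fact that the center of each Weyl-Sims ball lies inside that ball. First I would note that by the characterization \eqref{ms3.7}, taking $l = \mathscr{C}(t,\lambda)$ gives
$$ \left(l - \mathscr{C}(t,\lambda)\right)^* P(t,\lambda)\left(l - \mathscr{C}(t,\lambda)\right) = 0 \le \mathscr{R}(t,\lambda), $$
the inequality holding since $\mathscr{R}(t,\lambda) > 0$ for $t \ge t_1$. Thus $\mathscr{C}(t,\lambda) \in D(t,\lambda)$, equivalently $0 \in D(t,\lambda) - \mathscr{C}(t,\lambda)$, for every $t \in [t_1,\infty)_\T$.

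Since this membership holds for all such $t$, I would conclude that $0 \in \bigcap_{t\in[t_1,\infty)_\T}\left[D(t,\lambda) - \mathscr{C}(t,\lambda)\right]$. Invoking the intersection identity \eqref{bep3.36} from part (v), this common intersection equals $D(\infty,\lambda) - \mathscr{C}(\infty,\lambda)$, so $0 \in D(\infty,\lambda) - \mathscr{C}(\infty,\lambda)$, which is exactly the desired conclusion $\mathscr{C}(\infty,\lambda) \in D(\infty,\lambda)$.

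As an alternative route avoiding the shifted-set equality, I would fix $t \ge t_1$ and observe that for every $\tau \ge t$ the nesting in part (i) gives $\mathscr{C}(\tau,\lambda) \in D(\tau,\lambda) \subseteq D(t,\lambda)$; letting $\tau \to \infty$ and using compactness of $D(t,\lambda)$ from part (iii) together with the existence of $\mathscr{C}(\infty,\lambda)$ from part (iv), the closed set $D(t,\lambda)$ must contain the limit $\mathscr{C}(\infty,\lambda)$. Since $t \ge t_1$ was arbitrary, $\mathscr{C}(\infty,\lambda)$ lies in the intersection $D(\infty,\lambda)$.

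I do not anticipate any real obstacle here, since the substantive content---nesting, compactness, existence of the limit center, and the intersection identity---is already delivered by parts (i)--(v). The only point needing verification is that a center satisfies the defining inequality of its own ball, which rests solely on the positivity $\mathscr{R}(t,\lambda) > 0$ recorded just before \eqref{ms3.8}.
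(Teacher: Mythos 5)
Your argument for item (vi) is itself correct, in both variants: since $\mathscr{R}(t,\lambda)>0$ for $t\ge t_1(\lambda)$ by Lemma \ref{mslem3.1}(iii), the representation \eqref{ms3.7} indeed gives $\mathscr{C}(t,\lambda)\in D(t,\lambda)$ for every $t\in[t_1,\infty)_\T$, and then either the shifted-intersection identity \eqref{bep3.36} or, in your alternative route, the closedness of $D(t,\lambda)$ from (iii) together with $\mathscr{C}(\tau,\lambda)\rightarrow\mathscr{C}(\infty,\lambda)$ from (iv) delivers $\mathscr{C}(\infty,\lambda)\in D(\infty,\lambda)$. There is no flaw in either deduction.

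The genuine gap is one of coverage: the statement to be proved is the entire six-item theorem, and your proposal establishes only item (vi) while treating (i)--(v) as hypotheses. Your closing remark that ``the substantive content --- nesting, compactness, existence of the limit center, and the intersection identity --- is already delivered by parts (i)--(v)'' inverts the burden of proof, since those parts \emph{are} the content of the theorem, and none of them is argued in your proposal. For comparison, the paper proves (i) directly: multiplying \eqref{ms3.1} on the left by $(I_n|l^*)$ and on the right by $\left(\begin{smallmatrix} I_n\\ l\end{smallmatrix}\right)$ yields \eqref{ms3.9} for $l\in D(t,\lambda)$, and since $W(\cdot,\lambda)\ge 0$ by \eqref{ms3.5} the integral term is nondecreasing in $t$, so the defining inequality persists at every $\tau\in[t_0,t)_\T$, i.e.\ $l\in D(\tau,\lambda)$; for (ii)--(vi) it then invokes the continuous and discrete antecedents via the parametrization \eqref{ms3.8}. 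A self-contained treatment along your lines would still owe proofs of the monotonicity statements for $P$ and $\mathscr{R}$ (Lemma \ref{mslem3.1}), convexity and compactness of \eqref{ms3.7} when $P(t,\lambda)>0$, a Cauchy-type argument for the convergence of the centers in (iv), and the two inclusions making up \eqref{bep3.36} --- precisely the material your argument consumes as input.
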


\begin{proof}
In \eqref{ms3.1}, multiply on the left by $(I_n|l^*)$ and on the right by $\left(\begin{smallmatrix} I_n\\l\end{smallmatrix}\right)$ to obtain
\begin{eqnarray*}
 &\left(\widehat{\theta}(t)+\widehat{\phi}(t)l\right)^*\mathscr{U}_{2n}J\left(\widehat{\theta}(t)+\widehat{\phi}(t)l\right) = \left(\widehat{\theta}(t_0)+\widehat{\phi}(t_0)l\right)^*\mathscr{U}_{2n}J\left(\widehat{\theta}(t_0)+\widehat{\phi}(t_0)l\right) \\
 & +2\displaystyle\int_{t_0}^{t}(\theta(s)+\phi(s)l)^*W(s,\lambda)(\theta(s)+\phi(s)l)\Delta s.
\end{eqnarray*}
If $l\in D(t,\lambda)$, then
\begin{equation}\label{ms3.9}
 \left(\widehat{\theta}(t_0)+\widehat{\phi}(t_0)l\right)^*\mathscr{U}_{2n}J\left(\widehat{\theta}(t_0)+\widehat{\phi}(t_0)l\right) 
 +2\displaystyle\int_{t_0}^{t}(\theta(s)+\phi(s)l)^*W(s,\lambda)(\theta(s)+\phi(s)l)\Delta s \le 0;
\end{equation}
from \eqref{ms3.5} we have for $\tau\in[t_0,t)_\T$ that
$$ \int_{t_0}^{\tau}(\theta(s)+\phi(s)l)^*W(s,\lambda)(\theta(s)+\phi(s)l)\Delta s \le \int_{t_0}^{t}(\theta(s)+\phi(s)l)^*W(s,\lambda)(\theta(s)+\phi(s)l)\Delta s \le 0, $$
putting $l\in D(\tau,\lambda)$. Thus (i) holds. For the rest of the proof, see \cite[Theorem 3.6]{b2} and \cite[Theorem 3.2]{ms} using the representation \eqref{ms3.8} of $D(t,\lambda)$.
\end{proof}

By the nesting property above (Theorem \ref{msthm3.2}(v)), there exists a limiting set $D(\infty,\lambda)$ that may consist of a single point. If $l\in D(\infty,\lambda)$, then from \eqref{ms3.9} we see for $t\in[t_0,\infty)_\T$ that
$$ \int_{t_0}^{t}(\theta(s)+\phi(s)l)^*W(s,\lambda)(\theta(s)+\phi(s)l)\Delta s \le -\frac{1}{2} \left(\widehat{\theta}(t_0)+\widehat{\phi}(t_0)l\right)^*\mathscr{U}_{2n}J\left(\widehat{\theta}(t_0)+\widehat{\phi}(t_0)l\right). $$
Consequently the infinite integral satisfies
$$  \int_{t_0}^{\infty}(\theta(s)+\phi(s)l)^*W(s,\lambda)(\theta(s)+\phi(s)l)\Delta s<\infty, $$
ergo the function $\psi(\lambda):=\theta(\lambda)+\phi(\lambda)l$ is $W(\lambda)-$square integrable. It then follows from \eqref{ms3.5} that $\psi(\lambda)$ is $\widetilde{A}-$square integrable, where $\widetilde{A}(t):=\mathscr{U}_{2n}A(t)\mathscr{U}_{2n}^*$.

% Remark %

\begin{remark}
In the \eqref{ms3.7} representation of the set $D(t,\lambda)$ the matrix $\mathscr{C}(t,\lambda)$ can be thought of playing the part of the center, and $\mathscr{R}(t,\lambda)$ that of the radius of the Weyl circles.
\end{remark}

Now consider the adjoint equation \eqref{ms2.4}. Analogous to \eqref{ms3.1}, the fundamental system $\widehat{Z}$ satisfies
\begin{equation}\label{ms3.10}
 \left(\widehat{Z}^*\mathscr{U}_{2n}^{-1}J\widehat{Z}\right)(t) - \left(\widehat{Z}^*\mathscr{U}_{2n}^{-1}J\widehat{Z}\right)(t_0) = 2 \int_{t_0}^{t} Z^*(s)\widetilde{W}(s,\lambda)Z(s)\Delta s,
\end{equation}
where
$$ \widetilde{W}(t,\lambda):=\re\left[(\lambda A(t)+B(t))\mathscr{U}_{2n}^{-1*}\right] = \re\left[\mathscr{U}_{2n}^{-1}\left(\overline{\lambda} A(t)+B^*(t)\right)\right], $$
and $W(t,\lambda)=\mathscr{U}_{2n}\widetilde{W}(t,\lambda)\mathscr{U}_{2n}^*$ for $W$ given in \eqref{Wdef}.

% Definition %

\begin{definition}
Let $\lambda_0\in\C$, and let $\mathscr{U}_{2n}$ be given as above in \eqref{Udef}. Then $(\lambda_0,\mathscr{U}_{2n}^{-1})$ is called an admissible pair for the adjoint equation \eqref{ms2.4} if and only if
$$ \widetilde{W}(t,\lambda)=\re\left[(\lambda A(t)+B(t))\mathscr{U}_{2n}^{-1*}\right] \ge 0, \quad t\in[t_0,\infty)_\T, $$
and define the set
\begin{eqnarray}
 \widetilde{\Lambda}(\lambda_0,\mathscr{U}_{2n}^{-1}) &:=&
 \left\{\lambda\in\C:\;\exists\;\delta>0\;\ni\;\re[(\lambda-\lambda_0)A(t)\mathscr{U}_{2n}^{-1*}] \ge \delta \mathscr{U}_{2n}^{-1} A(t) \mathscr{U}_{2n}^{-1*}\; \forall\; t\in[t_0,\infty)_\T \right\} \nonumber \\
  &=& \left\{\lambda\in\C:\;\exists\;\delta>0\;\ni\;\re[(\lambda-\lambda_0)\mathscr{U}_{2n}A(t)] \ge \delta A(t)\; \forall\; t\in[t_0,\infty)_\T \right\}. \label{ms3.11}
\end{eqnarray}
\end{definition}

Similar to the authors in \cite[p. 89]{ms}, we make the following remarks for the sake of completeness.

% Remark %

\begin{remark}
(i) Note that $(\lambda_0,\mathscr{U}_{2n})$ is an admissible pair for \eqref{maineq} if and only if $(\lambda_0,\mathscr{U}_{2n}^{-1})$ is an admissible pair for the adjoint equation \eqref{ms2.4}. For $\lambda\in \Lambda(\lambda_0,\mathscr{U}_{2n}) \cap \widetilde{\Lambda}(\lambda_0,\mathscr{U}_{2n}^{-1})$ and sufficiently large $t\in\T$, the Weyl-Sims sets for \eqref{ms2.4} are 
$$ \widetilde{D}(t,\lambda)=\left\{l^*\in\C^{n,n}: l\in D(t,\lambda)\right\}=D^*(t,\lambda); $$
see the discussion in \cite{b2}.

(ii) For $\lambda\in\widetilde{\Lambda}(\lambda_0,\mathscr{U}_{2n}^{-1})$ we have from \eqref{ms3.10} that $l^*\in\widetilde{D}(t,\lambda)$ if and only if
$$ \int_{t_0}^{t}(\eta(s)+\chi(s)l^*)^*\widetilde{W}(s,\lambda)(\eta(s)+\chi(s)l^*)\Delta s \le -\frac{1}{2} \left(\widehat{\eta}(t_0)+\widehat{\chi}(t_0)l^*\right)^*\mathscr{U}_{2n}^{-1}J\left(\widehat{\eta}(t_0)+\widehat{\chi}(t_0)l^*\right). $$
If $l^*\in\widetilde{D}(\infty,\lambda)$, then $\zeta(\lambda)=\eta(\lambda)+\chi(\lambda)l^*$ satisfies
$$ \int_{t_0}^{\infty} \zeta^*(s)\widetilde{W}(s,\lambda)\zeta(s)\Delta s < \infty, $$
making $\zeta$ a $\widetilde{W}(\lambda)-$square integrable function.

(iii) If $W(t,\lambda)\ge\delta A(t)$ for all $t\in[t_0,\infty)_\T$, some $\delta>0$ and $\lambda\in\C$, then $L_{W(\lambda)}^2(t_0,\infty)_\T\subseteq L_{A}^2(t_0,\infty)_\T$. This condition holds in the following two cases:
\begin{itemize}
 \item if $\lambda\in\widetilde{\Lambda}(\lambda_0,\mathscr{U}_{2n}^{-1})$, as seen by \eqref{ms3.2} and \eqref{ms3.11}, or
 \item if \begin{equation}\label{ms3.12} 
            \lambda\in\Lambda(\lambda_0,\mathscr{U}_{2n}) \quad\text{and}\quad \widetilde{A}(t)\ge \gamma A(t) \quad\text{for some}\quad \gamma>0
          \end{equation}
       by using \eqref{ms3.2} and \eqref{ms3.3}.
\end{itemize}

(iv) If $\lambda\in\Lambda(\lambda_0,\mathscr{U}_{2n})$, then $\widetilde{W}(t,\lambda)\ge\delta A(t)$ for some $\delta>0$, ergo $L_{\widetilde{W}(\lambda)}^2(t_0,\infty)_\T\subseteq L_{A}^2(t_0,\infty)_\T$. Consequently, if $\lambda\in\Lambda(\lambda_0,\mathscr{U}_{2n})\cap\widetilde{\Lambda}(\lambda_0,\mathscr{U}_{2n}^{-1})$, then we have
$$ L_{W(\lambda)}^2(t_0,\infty)_\T \cup L_{\widetilde{W}(\lambda)}^2(t_0,\infty)_\T \subseteq L_{A}^2(t_0,\infty)_\T, $$
and $\psi$ and $\zeta$ are $A-$square integrable.

(v) Using \eqref{ms3.3} and \eqref{ms3.11}, condition \eqref{ms3.12} above yields
\begin{equation}\label{ms3.13}
 \Lambda(\lambda_0,\mathscr{U}_{2n}) \subseteq \widetilde{\Lambda}(\lambda_0,\mathscr{U}_{2n}^{-1})
\end{equation}
for admissible $(\lambda_0,\mathscr{U}_{2n})$. Besides \eqref{ms3.12}, if the reverse inequality $A(t)\ge\tilde{\gamma}\widetilde{A}(t)$ holds for some $\tilde{\gamma}>0$, that is to say if $\widetilde{A}(t)\asymp A(t)$, then we have equality in \eqref{ms3.13}.

(vi) As in \cite{b2} and \cite{ms}, the structure of the shifted limit set $D(\infty,\lambda)-\mathscr{C}(\infty,\lambda)$ gives information about the number of $W(\lambda)-$square integrable solutions to system \eqref{maineq}. To be more explicit, let $\mathscr{N}\mathcal{N}(\lambda):=\bigcup_{N\in D(\infty,\lambda)}\range(N -\mathscr{C}(\infty,\lambda))$ and $r$ be the dimension of the linear hull
of $\mathcal{N}(\lambda)$. Then there are at least $n+r$ linearly independent $W(\lambda)-$square integrable solutions of \eqref{maineq}, and if $\mathscr{R}(t,\lambda)\not\longrightarrow 0$ as $t\rightarrow\infty$ in the time scale, the number is exactly $n+r$.

In addition, if $\lambda\in\Lambda(\lambda_0,\mathscr{U}_{2n})\cap\widetilde{\Lambda}(\lambda_0,\mathscr{U}_{2n}^{-1})$, then \eqref{ms2.4} has precisely $n$ linearly independent $\widetilde{W}(\lambda)-$square integrable solutions if $\mathscr{R}(t,\lambda)\rightarrow 0$ as $t\rightarrow\infty$ in the time scale. If $r=0$, then we also have $\tilde{r}=0$, where $\tilde{r}$ is the corresponding number for the adjoint equation, and at least one of \eqref{maineq}, \eqref{ms2.4} has exactly $n$ linearly independent solutions that are $W(\lambda)$, $\widetilde{W}(\lambda)-$square integrable,
respectively.
\end{remark}

%%%%%%%%%%%%%%%%%%%%%%%%%%%%%%%%%%%%
%     Examples as in BEP__03       % 
%         Section 4                %
%%%%%%%%%%%%%%%%%%%%%%%%%%%%%%%%%%%%

\section{Examples}\label{sec4}

The examples given in the continuous case \cite[Section 3]{b2} require $A$ defined in \eqref{ABdef} to be positive semi-definite. In the discrete case \cite{ms}, however, the blanket positive definite assumption $A>0$ rules out giving the corresponding examples for difference equations. Thus we are able in this section to extend the examples given in \cite[Section 3]{b2} to difference equations and other cases on Sturmian time scales, and mention a new result for general even-order dynamic equations.

% Example 1 %

\begin{example}
For $n=1$, set
\begin{equation}\label{bep3.22}
 \mathscr{U}_2=\begin{pmatrix} -u & 0 \\ 0 & \bar{u} \end{pmatrix}
\end{equation}
for some nonzero $u\in\C$. Then \eqref{ms3.6} yields
$$ 2\begin{pmatrix} S(t) & T(t) \\ T^*(t) & P(t) \end{pmatrix} = \begin{pmatrix} \theta_1(t) & \phi_1(t) \\ \theta_2^\rho(t) & \phi_2^\rho(t) \end{pmatrix}^* \begin{pmatrix} 0 & u(t) \\ \bar{u}(t) & 0 \end{pmatrix} \begin{pmatrix} \theta_1(t) & \phi_1(t) \\ \theta_2^\rho(t) & \phi_2^\rho(t) \end{pmatrix}, $$
so that when $P=P(t,\lambda)>0$ we have, also using \eqref{circrad},
$$ P = \re\left(u\overline{\phi}_1\phi_2^\rho\right), \quad \mathscr{R} = \frac{1}{4P}|u|^2|\Gamma|^2, \quad \mathscr{C} = \frac{-1}{2P}\left(u\overline{\phi}_1\theta_2^\rho + \overline{u}\theta_1\overline{\phi}_2^\rho\right), $$
where $\Gamma=\theta_1\phi_2^\rho-\theta_2^\rho\phi_1$. Thus \eqref{ms3.7} is
$$ D(t,\lambda) = \left\{l\in\C: |l-\mathscr{C}(t,\lambda)| \le \frac{|u||\Gamma(t,\lambda)|}{2P(t,\lambda)}\right\}. $$
As a special case of this, consider on Sturmian time scales the second-order scalar Sturm-Liouville problem
\begin{equation}\label{bep3.23}
 -(pv^\nabla)^\Delta(t) + q(t) v(t) = \lambda w(t) v(t), \quad t\in[t_0,\infty)_\T,
\end{equation}
where $p$ and $q$ are complex-valued functions, with $p^{-1},q,w\in L^1_{\loc}[t_0,\infty)_\T$ such that $p\neq 0$ and $w>0$ on $[t_0,\infty)_\T$. Then \eqref{bep3.23} can be written in the form \eqref{maineq}
$$ y=\begin{pmatrix} v \\ p^\sigma v^\Delta \end{pmatrix}, \quad \widehat{y}=\begin{pmatrix} v \\ p v^\nabla \end{pmatrix}, \quad A=\begin{pmatrix} w & 0 \\ 0 & 0 \end{pmatrix}, \quad B=\begin{pmatrix} -q & 0 \\ 0 & 1/p^\sigma \end{pmatrix}; $$
note that $A$ and $B$ satisfy \eqref{ABdef} and \eqref{msA1}, respectively. If we choose $u=e^{i\eta}$ for some $\eta\in\R$ in \eqref{bep3.23}, we see from \eqref{Wdef} that
\begin{equation}\label{bep3.24}
  W(t,\lambda)=\re\left[\mathscr{U}_{2}(\lambda A(t)+B(t))\right]=\begin{pmatrix} \re[e^{i\eta}(q-\lambda w)(t)] & 0 \\ 0 & \displaystyle\frac{1}{|p^\sigma(t)|^2}\re[e^{i\eta}p^\sigma(t)] \end{pmatrix}. 
\end{equation}
Thus for this choice of $\mathscr{U}_2$ we have
$$ (\lambda_0,\mathscr{U}_{2})\in\mathscr{S} \iff \re[e^{i\eta}(q-\lambda w)(t)], \; \displaystyle\frac{1}{|p^\sigma(t)|^2}\re[e^{i\eta}p^\sigma(t)]\ge 0 \quad \forall t\in[t_0,\infty)_\T. $$
In addition we see that for $(\lambda_0,\eta)$ such that $(\lambda_0,\mathscr{U}_{2})\in\mathscr{S}$,
$$  \Lambda(\lambda_0,\mathscr{U}_{2})=\left\{\lambda\in\C:  \re[(\lambda-\lambda_0)e^{i\eta}]<0\right\}, $$
just as in \cite[(3.25)]{b2}. It is easy to show that the definiteness condition \eqref{bep3.14} holds, see \cite[p. 425]{b2}.
\end{example}

% Example 2 %

\begin{example}\label{example2}
Consider the fourth-order scalar problem \cite[(7.1)]{agh}
\begin{equation}\label{bep3.26}
  (p_2v^{\Delta\nabla})^{\nabla\Delta}(t)-(p_1v^\nabla)^\Delta(t) + p_0(t) v(t) = \lambda w(t) v(t), \quad t\in[t_0,\infty)_\T,
\end{equation}
where $p_0$, $p_1$, and $p_2$ are complex-valued functions with $p_2\neq 0$ on $[t_0,\infty)_\T$, and $w\in L^2_{\loc}[t_0,\infty)_\T$ satisfies $w>0$ on $[t_0,\infty)_\T$. For the quasi-derivatives given by
$$ v^{[1]}=v^\Delta, \quad v^{[2]}=p_2^{\sigma} v^{\Delta\Delta}, \quad v^{[3]}=p_1^{\sigma}v^\Delta-(v^{[2]\rho})^\Delta, $$
we introduce the vector
$$ y = \left(v, v^{[1]}, v^{[3]}, v^{[2]} \right)^{\trans}, \quad\text{with}\quad \widehat{y} = \left(v, v^{[1]}, v^{[3]\rho}, v^{[2]\rho} \right)^{\trans}. $$
Then \eqref{bep3.26} can be written in the form of \eqref{maineq} if we take
$$ A=\diag\{w,0,0,0\} \quad\text{and}\quad B=\begin{pmatrix} -p_0 &  &  &  \\  & -p_1^\sigma & 1 &  \\  & 1 & 0 &  \\  &  &  & 1/p_2^\sigma \end{pmatrix}, $$
with the unstated entries being zero. Note that $A$ and $B$ satisfy \eqref{ABdef} and \eqref{msA1}, respectively. For some $\eta\in\R$ choose
\begin{equation}\label{u4}
 \mathscr{U}_4:=\begin{pmatrix} -e^{i\eta}I_2 & 0_2 \\ 0_2 & e^{-i\eta}I_2 \end{pmatrix}; 
\end{equation}
it follows from \eqref{Wdef} and \eqref{realpart} that
$$ W(t,\lambda) = \diag\left\{\re\left[e^{i\eta}(p_0(t)-\lambda w(t))\right], \re\left[e^{i\eta}p_1^\sigma(t)\right], 0,  \re\left[e^{i\eta}p_2^\sigma(t)\right]/|p_2^\sigma(t)|^2 \right\}, $$
so that for this choice of $\mathscr{U}_4$ we have
$$ (\lambda_0,\mathscr{U}_{4})\in\mathscr{S} \iff \re\left[e^{i\eta}(p_0(t)-\lambda_0 w(t))\right], \re\left[e^{i\eta}p_1^\sigma(t)\right], \re\left[e^{i\eta}p_2^\sigma(t)\right]\ge 0 \quad \forall t\in[t_0,\infty)_\T. $$
In addition we see that for $\lambda_0$ such that $(\lambda_0,\mathscr{U}_{4})\in\mathscr{S}$,
\begin{equation}\label{bep3.25}
  \Lambda(\lambda_0,\mathscr{U}_{4})=\left\{\lambda\in\C:  \re\left[(\lambda-\lambda_0)e^{i\eta}\right]<0\right\}.
\end{equation}
As in the previous example, the definiteness condition \eqref{bep3.14} holds.
\end{example}

% Example 3: 2nth order %

\begin{example}
Using Example $\ref{example2}$ as a guide, consider on $[t_0,\infty)_\T$ the formally self-adjoint $2n$th-order dynamic equation \cite{agh} of the form (suppressing the independent variable)
\begin{equation}\label{exeqeven}
 (-1)^n(p_nv^{\Delta^{n-1}\nabla})^{\nabla^{n-1}\Delta} + \dots  - (p_{3}v^{\Delta^2\nabla})^{\nabla^2\Delta} + (p_{2}v^{\Delta\nabla})^{\nabla\Delta} - (p_{1}v^\nabla)^\Delta + p_0v = \lambda w v,
\end{equation}
where $p_j$ is a complex-valued function for $j=0,1,\ldots,n-1$ with $p_n\neq 0$ on $[t_0,\infty)_\T$, and $w\in L^2_{\loc}[t_0,\infty)_\T$ satisfies $w>0$ on $[t_0,\infty)_\T$. Let $A=\diag\{w,0,\ldots,0\}$, and let $B$ be as in \eqref{ABdef}, where we take
$$ B_1 = \diag\{-p_0,-p_1^\sigma,-p_2^\sigma,\ldots,-p_{n-1}^\sigma\}, \quad B_2 = \subdiag\{1,1,\ldots,1\}, $$
$$ B_3 = \supdiag\{1,1,\ldots,1\}=B_2^{\trans}, \quad B_4 = \diag\{0,\ldots,0,1/p_{n}^\sigma\}. $$
Here $\subdiag$ means the matrix with all zero entries except on the subdiagonal; similarly $\supdiag$ has nonzero entries only on the superdiagonal. Clearly the conditions in \eqref{ABdef} and \eqref{msA1} are satisfied. Set 
$$ y = \left(v, v^{[1]}, v^{[2]}, \ldots, v^{[n-1]}, v^{[2n-1]}, v^{[2n-2]}, \ldots, v^{[n]} \right)^{\trans}, $$
where the quasi-derivatives are given by
\begin{eqnarray*}
  v^{[k]} &=& v^{\Delta^k}, \quad 1\le k\le n-1,   \\
  v^{[n]} &=& p_n^\sigma v^{\Delta^{n}},  \\
  v^{[n+k]} &=& p_{n-k}^\sigma v^{[n-k]} - \left(v^{[n+k-1]\rho}\right)^\Delta, \quad 1 \le k \le n-1.
\end{eqnarray*}
Then \eqref{exeqeven} can be written in the form of \eqref{maineq}. For some $\eta\in\R$ choose
$$ \mathscr{U}_{2n}:=\begin{pmatrix} -e^{i\eta}I_n & 0_n \\ 0_n & e^{-i\eta}I_n \end{pmatrix}. $$
It follows from \eqref{Wdef} and \eqref{realpart} that
\begin{eqnarray*}
 W(t,\lambda) = \diag\left\{\re\left[e^{i\eta}(p_0(t)-\lambda w(t))\right], \re\left[e^{i\eta}p_1^\sigma(t)\right], \re\left[e^{i\eta}p_2^\sigma(t)\right], \ldots, \re\left[e^{i\eta}p_{n-1}^\sigma(t)\right], \right. \\
 \left. 0, \ldots, 0, \re\left[e^{i\eta}p_n^\sigma(t)\right]/|p_n^\sigma(t)|^2 \right\}, 
\end{eqnarray*}
so that for this choice of $\mathscr{U}_{2n}$ we have
$$ (\lambda_0,\mathscr{U}_{2n})\in\mathscr{S} \iff \re\left[e^{i\eta}(p_0(t)-\lambda_0 w(t))\right], \re\left[e^{i\eta}p_1^\sigma(t)\right], \ldots, \re\left[e^{i\eta}p_n^\sigma(t)\right]\ge 0 $$
for all $t\in[t_0,\infty)_\T$.
In addition we see that for $\lambda_0$ such that $(\lambda_0,\mathscr{U}_{2n})\in\mathscr{S}$,
$$ \Lambda(\lambda_0,\mathscr{U}_{2n})=\left\{\lambda\in\C:  \re\left[(\lambda-\lambda_0)e^{i\eta}\right]<0\right\}. $$
As in the previous examples, the definiteness condition \eqref{bep3.14} holds.

This example is completely new, as general even-order equations are not mentioned in either \cite{b2} or \cite{ms}; clearly the special cases $\T=\R$ and $\T=\Z$ are included here.
\end{example}

% Example 4 %

\begin{example}
Consider the Orr-Sommerfeld equation on time scales given by
\begin{equation}\label{bep3.27}
 (-D^2+a^2)^2u+iaR\left[V(-D^2+a^2)u+uD^2V\right]=\lambda (-D^2+a^2)u, \quad D^2u\equiv u^{\nabla\Delta}
\end{equation}
on some interval $I\subseteq [t_0,\infty)_\T$, where $a>0$ is the wave number, $R>0$ is the Reynolds number, and $V$ is a real-valued flow velocity profile perpendicular to $I$; see \cite[Example 3.4]{b2} and Orszag \cite{orszag}. If we introduce the variables
$$ y_1 = -u^{\nabla\Delta}+a^2u, \quad y_2 = u, \quad y_3 = (-u^{\nabla\Delta}+a^2u)^\Delta, \quad y_4 = u^\Delta, $$
then 
$$ \widehat{y}(t) = \left(-u^{\nabla\Delta}+a^2u, u, (-u^{\nabla\Delta}+a^2u)^\nabla, u^\nabla \right)^{\trans}(t), $$
and we see that \eqref{bep3.27} is equivalent to the Hamiltonian system \eqref{maineq} with
$$ A=\diag\{1,0,0,0\} \quad\text{and}\quad B=\begin{pmatrix} -a^2-iaRV & -iaRV^{\nabla\Delta} & 0 & 0 \\ 1 & -a^2 & 0 & 0 \\ 0 & 0 & 1 & 0 \\ 0 & 0 & 0 & 1 \end{pmatrix} $$
such that \eqref{ABdef} and \eqref{msA1} are easily satisfied. We will choose the same matrix $\mathscr{U}_4$ as in \eqref{u4} for some $\eta\in\R$; then by \eqref{Wdef} we have
$$ W(t,\lambda) = \begin{pmatrix} a^2\cos\eta-aRV\sin\eta-\re[\lambda e^{i\eta}] & \frac{1}{2}(aRV^{\nabla\Delta}ie^{i\eta}-e^{-i\eta}) & 0 & 0 \\ -\frac{1}{2}(aRV^{\nabla\Delta}ie^{-i\eta}+e^{i\eta}) & a^2\cos\eta & 0 & 0 \\ 0 & 0 & \cos\eta & 0 \\ 0 & 0 & 0 & \cos\eta \end{pmatrix}. $$
As in the case $\T=\R$ \cite{b2}, we thus have
\begin{equation}\label{bep3.28}
 W(t,\lambda)\ge 0\iff \begin{cases} \cos\eta>0 &  \text{and} \\ \re(\lambda e^{i\eta}) \le & a^2\cos\eta-aRV(t)\sin\eta \\
 &  - \displaystyle \frac{1+\left(aRV^{\nabla\Delta}(t)\right)^2+2aRV^{\nabla\Delta}(t)\sin(2\eta)}{4a^2\cos\eta}, \end{cases}
\end{equation}
so that $W(t,\lambda)>0$ if the last inequality in \eqref{bep3.28} is strict. Consequently, $(\lambda_0,\mathscr{U}_4)\in\mathscr{S}$ if and only if the right-hand side of \eqref{bep3.28} holds for $\lambda_0$ in place of $\lambda$, for all $t\in I$. We can also show that for such an $\eta$ and $\lambda_0$ we get that $ \Lambda(\lambda_0,\mathscr{U}_{4})$ is again as in \eqref{bep3.25}. Similarly, the definiteness condition \eqref{bep3.14} holds.
\end{example}

%%%%%%%%%%%%%%%%%%%%%%%%%%%%%%%%%%%%
% Definition of the operators      %
% $L_\xi$ and $\widetilde{L}_\xi$  % 
%         Section 5                %
%%%%%%%%%%%%%%%%%%%%%%%%%%%%%%%%%%%%

\section{Definition of the operators $L_\xi$ and $\widetilde{L}_\xi$}\label{sec5}

We will show below that for fixed $\xi\in\Lambda(\lambda_0,\mathscr{U}_{2n})$ and $M_0\in D(\infty,\xi)$ there exists for $\lambda\in\Lambda(\lambda_0,\mathscr{U}_{2n})$ a matrix-valued function $M(\lambda)$ such that $M(\xi)=M_0$. In addition, the Weyl solutions satisfy condition \eqref{ms4.3} at infinity. This in turn will allow us to introduce the operator $L_\xi$ associated with \eqref{maineq} and the operator $\widetilde{L}_\xi$ associated with the adjoint system \eqref{ms2.4}.  Throughout the discussion we will assume that
\begin{equation}\label{ms4.1}
 \widetilde{A}(t)=\mathscr{U}_{2n}A(t)\mathscr{U}_{2n}^* \asymp A(t), \quad t\in[t_0,\infty)_\T.
\end{equation}
Indeed this holds if, for example, $A_k(t)=a_k(t)I_n+\widetilde{A}_k(t)$ for arbitrary $a_k(t)$ and $1/c\le\widetilde{A}_k(t)\le c$ for some real constant $c>1$, for all $t\in[t_0,\infty)_\T$ and $k=1,2$.

% Theorem % 

\begin{theorem}\label{thm4.1}
Let $\xi\in\Lambda(\lambda_0,\mathscr{U}_{2n})$ for $(\lambda_0,\mathscr{U}_{2n})\in\mathscr{S}$, and $M_0\in D(\infty,\xi)$ be fixed. Then there exists a function $M:\Lambda(\lambda_0,\mathscr{U}_{2n})\rightarrow\C^{n,n}$ such that $M(\xi)=M_0$, $M(\lambda)\in D(\infty,\lambda)$ and 
\begin{equation}\label{ms4.2}
 M(\lambda) - M_0 = (\lambda-\xi)\int_{t_0}^{\infty} \zeta^*(t,\xi)A(t)\psi(t,\lambda) \Delta t = (\lambda-\xi)\int_{t_0}^{\infty} \zeta^*(t,\lambda)A(t)\psi(t,\xi) \Delta t
\end{equation}
for $\lambda\in\Lambda(\lambda_0,\mathscr{U}_{2n})$, where $\psi(t,\lambda):=\theta(t,\lambda)+\phi(t,\lambda)M(\lambda)$ and $\zeta(t,\lambda):=\eta(t,\lambda)+\chi(t,\lambda)M(\lambda)^*$. In addition, 
\begin{equation}\label{ms4.3}
 \lim_{t\rightarrow\infty}\widehat{\zeta}^*(t,\xi)J\widehat{\psi}(t,\lambda) = \lim_{t\rightarrow\infty}\widehat{\zeta}^*(t,\lambda)J\widehat{\psi}(t,\xi) = 0.
\end{equation}
\end{theorem}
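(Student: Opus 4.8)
The plan is to let Green's formula (Theorem~\ref{ibp}) do the work and to read off both \eqref{ms4.2} and \eqref{ms4.3} from a single finite-interval identity. First I would apply Theorem~\ref{ibp} on $[t_0,b]$ to $y=\psi(\cdot,\lambda)$, a solution of \eqref{maineq}, and $z=\zeta(\cdot,\xi)$, a solution of the adjoint system \eqref{ms2.4}. Using $J\widehat{\psi}^\Delta=(\lambda A+B)\psi$, $J\widehat{\zeta}^\Delta=(\overline{\xi}A+B^*)\zeta$, and $A^*=A$, the integrand collapses to $(\lambda-\xi)\zeta^*A\psi$, so that
\[ (\lambda-\xi)\int_{t_0}^{b}\zeta^*(t,\xi)A(t)\psi(t,\lambda)\,\Delta t = \widehat{\zeta}^*(b,\xi)J\widehat{\psi}(b,\lambda)-\widehat{\zeta}^*(t_0,\xi)J\widehat{\psi}(t_0,\lambda). \]
The initial conditions $\widehat{Y}(t_0)=J$ from \eqref{ms2.7} and $\widehat{Z}(t_0)=J$ from Remark~\ref{rmk1.4} give $\widehat{\psi}(t_0,\lambda)=\left(\begin{smallmatrix}-M(\lambda)\\ I_n\end{smallmatrix}\right)$ and $\widehat{\zeta}(t_0,\xi)=\left(\begin{smallmatrix}-M_0^*\\ I_n\end{smallmatrix}\right)$, whence the $t_0$-boundary term equals $M_0-M(\lambda)$. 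Thus the finite-$b$ identity already shows that, once $M(\lambda)$ is in hand and the infinite integral converges, the first equality in \eqref{ms4.2} is \emph{equivalent} to the vanishing of the first boundary limit in \eqref{ms4.3}.

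Next I would construct $M$. From the given $M_0\in D(\infty,\xi)$, the Weyl solutions $\psi(\cdot,\xi)$ and $\zeta(\cdot,\xi)$ are $W(\xi)$-, respectively $\widetilde{W}(\xi)$-square integrable, hence $A$-square integrable by \eqref{ms3.5} and the equivalence \eqref{ms4.1}. For general $\lambda$, the nesting property (Theorem~\ref{msthm3.2}) supplies a nonempty compact convex limit set $D(\infty,\lambda)$, and for every $l\in D(\infty,\lambda)$ the candidate $\psi_l=\theta(\cdot,\lambda)+\phi(\cdot,\lambda)l$ is $A$-square integrable, so that by the Cauchy--Schwarz inequality the integral $\int_{t_0}^{\infty}\zeta^*(t,\xi)A(t)\psi_l(t)\,\Delta t$ converges. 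I would then define $M(\lambda)$ as the element $l\in D(\infty,\lambda)$ solving the affine fixed-point relation $l-M_0=(\lambda-\xi)\int_{t_0}^{\infty}\zeta^*(\cdot,\xi)A\psi_l\,\Delta t$, obtaining it by contraction for $\lambda$ close to $\xi$ (using that differences $\psi_l-\psi_{l'}=\phi(\cdot,\lambda)(l-l')$ are square integrable with norm controlled by the Weyl radius $\mathscr{R}$) and extending to all of $\Lambda(\lambda_0,\mathscr{U}_{2n})$ by analyticity together with the disk nesting, exactly along the lines of \cite{b2} and \cite{ms}. By construction $M(\xi)=M_0$ and $M(\lambda)\in D(\infty,\lambda)$.

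With this $M$ the fixed-point relation is precisely the first equality of \eqref{ms4.2}; substituting it into the finite-$b$ identity forces $\widehat{\zeta}^*(b,\xi)J\widehat{\psi}(b,\lambda)\to0$, the first limit in \eqref{ms4.3}. The second equality and limit follow by symmetry: I would re-run Theorem~\ref{ibp} with the roles reversed, taking $\psi(\cdot,\xi)$ as the solution of \eqref{maineq} and $\zeta(\cdot,\lambda)$ as the solution of \eqref{ms2.4}, so that the integrand becomes $(\xi-\lambda)\zeta^*(\cdot,\lambda)A\psi(\cdot,\xi)$ and the $t_0$-term becomes $M(\lambda)-M_0$, yielding $M(\lambda)-M_0=(\lambda-\xi)\int_{t_0}^{\infty}\zeta^*(\cdot,\lambda)A\psi(\cdot,\xi)\,\Delta t$ once that boundary limit vanishes.

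The hard part is the construction in the middle paragraph: producing a single function, analytic on the whole cone $\Lambda(\lambda_0,\mathscr{U}_{2n})$, that simultaneously lies in the nested set $D(\infty,\lambda)$ and satisfies the integral relation. The difficulty is that $\theta(\cdot,\lambda)$ and $\phi(\cdot,\lambda)$ need not be individually $A$-square integrable, so the defining integral only makes sense along the affine family generated by $D(\infty,\lambda)$; a naive Banach fixed point on all of $\C^{n,n}$ is unavailable, and one must instead exploit the compactness and monotonicity furnished by Theorem~\ref{msthm3.2} (and analytic continuation for global existence). Equivalently, the crux is to pin the value of the \emph{a priori} merely convergent boundary limit in \eqref{ms4.3} down to zero.
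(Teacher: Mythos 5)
Your reduction is sound and matches the scheme underlying the proofs the paper cites: the Green's formula computation is correct, the boundary values $\widehat{\psi}(t_0,\lambda)=\bigl(\begin{smallmatrix}-M(\lambda)\\ I_n\end{smallmatrix}\bigr)$, $\widehat{\zeta}(t_0,\xi)=\bigl(\begin{smallmatrix}-M_0^*\\ I_n\end{smallmatrix}\bigr)$ and the resulting $t_0$-term $M_0-M(\lambda)$ are right, and you correctly identify that \eqref{ms4.2} and \eqref{ms4.3} are equivalent once $M(\lambda)\in D(\infty,\lambda)$ is in hand. But the middle step --- the actual construction of $M$ --- is where your proposal has a genuine gap, and you have not closed it. Defining $M(\lambda)$ as a fixed point of $\Phi(l)=M_0+(\lambda-\xi)\int_{t_0}^{\infty}\zeta^*(\cdot,\xi)A\psi_l\,\Delta t$ requires two things you do not establish: (a) invariance, i.e.\ that $\Phi$ maps $D(\infty,\lambda)$ into itself --- nothing in the nesting theorem gives this, and without it neither Banach (on the restricted domain) nor Brouwer/Schauder (on the compact convex set) applies, while a contraction on all of $\C^{n,n}$ is unavailable since $\psi_l$ is square integrable only for $l$ in the disk, as you yourself note; and (b) globality --- ``extending by analyticity'' is circular, since analytic continuation to all of $\Lambda(\lambda_0,\mathscr{U}_{2n})$ presupposes exactly the existence you are trying to prove, and the geometry of $D(\infty,\lambda)$ varies with $\lambda$. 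Your treatment of the second equality is likewise circular: your reversed Green identity shows the second equality of \eqref{ms4.2} holds \emph{iff} the second limit in \eqref{ms4.3} vanishes, and you prove neither side independently.

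The proof the paper points to (Brown--Evans--Plum, Theorem 5.4, and Monaquel--Schmidt, Theorem 4.1) avoids all of this by a finite-section construction rather than a fixed point: for each truncation point $b$ one defines $M_b(\lambda)$ by imposing the boundary condition $\widehat{\zeta}^*(b,\xi)J\bigl(\widehat{\theta}(b,\lambda)+\widehat{\phi}(b,\lambda)M_b(\lambda)\bigr)=0$ \emph{exactly at} $b$, so that your finite-$b$ Green identity holds with zero boundary term; one then shows directly that $M_b(\lambda)$ lies in the Weyl--Sims set $D(b,\lambda)$, and the nesting and compactness from Theorem~\ref{msthm3.2} yield a subsequential limit $M(\lambda)\in D(\infty,\lambda)$, with uniform $W(\lambda)$-bounds on the approximants $\psi_b$ (coming from disk membership via \eqref{ms3.9}) justifying passage to the limit in the integral. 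This simultaneously produces $M$, its global definition on the cone, membership in $D(\infty,\lambda)$, and the first equality of \eqref{ms4.2}; the second equality and the limits \eqref{ms4.3} then follow by running the same construction for the adjoint system, using $\widetilde{D}(t,\lambda)=D^*(t,\lambda)$ to identify the adjoint $M$-function with $M(\lambda)^*$. In short: your framing of the problem is accurate (you even name the crux correctly --- pinning the convergent boundary limit to zero), but the mechanism you propose to resolve it does not work as stated, and the truncated-boundary-condition argument is the missing idea.
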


\begin{proof}
See \cite[Theorem 5.4]{b2} and \cite[Theorem 4.1]{ms}, as the proof is unchanged from the continuous and discrete cases except for minor notational modifications.
\end{proof}

% Remark %

\begin{remark}
Clearly, $M(\lambda)$ has only one possible value if $D(\infty,\lambda)$ has only one element. Consequently, if there is at least one point $\xi\in\Lambda(\lambda_0,\mathscr{U}_{2n})$ such that $D(\infty,\lambda)$ has only one element, the function $M$ is uniquely determined.
\end{remark}

From this point on we assume that $(\lambda_0,\mathscr{U}_{2n})\in\mathscr{S}$ is fixed, and that the function $M$ and the selected value $\xi$ are as in Theorem \ref{thm4.1} above. Corresponding to system \eqref{maineq} is the inhomogeneous system
\begin{equation}\label{ms5.1}
 J\widehat{y}^\Delta(t)=\Big(\lambda A(t)+B(t)\Big) y(t) + A(t)f(t), \quad t\in[t_0,\infty)_\T,
\end{equation}
or in its expanded form (suppressing the $t$),
\begin{equation*}
 \begin{cases} 
   -&(y_2^\rho)^\Delta = \big(\lambda A_1+B_1\big)y_1 + B_2y_2+A_1f_1, \\
   &y_1^\Delta  = B_3y_1 + (\lambda A_2+B_4)y_2+A_2f_2. \end{cases}
\end{equation*}
Note that if $\mu(t)\neq 0$, then
$$ y_2(t)=\left(I+\mu(t)B_2(t)\right)^{-1}\left[y_2^\rho(t)-\mu(t)(\lambda A_1(t)+B_1(t))y_1(t)-\mu(t)A_1(t)f_1(t)\right]. $$
A solution $y$ of system \eqref{ms5.1} is thus related to $\widehat{y}$ (see \eqref{leftshift}) using
\begin{equation}\label{ms5.2}
  y(t) = H(t)\widehat{y}(t)+N(t)A(t)f(t), \quad t\in[t_0,\infty)_\T,
\end{equation}
where $H$ is given in \eqref{ms2.2} and
\begin{equation}\label{Ndef}
 N(t):=\begin{pmatrix} 0_n & 0_n \\ -\mu(t)E_2(t) & 0_n \end{pmatrix}=\begin{pmatrix} 0_n & 0_n \\ -\mu(t)\Big(I_n+\mu(t)B_2(t)\Big)^{-1} & 0_n \end{pmatrix}. 
\end{equation}
As $(I_n-\mu B_2E_2)=E_2$ we have $(\lambda A+B)N+I_{2n}=\widetilde{H}^*$ from \eqref{Htilde}, so that \eqref{ms5.1} is equivalent to
\begin{equation}\label{ms5.3}
 J\widehat{y}^\Delta(t) = \Big(\lambda A(t)+B(t)\Big)H(t)\widehat{y}(t) + \widetilde{H}^*(t)A(t)f(t), \quad t\in[t_0,\infty)_\T.
\end{equation}
In a similar way, for the adjoint problem to \eqref{ms5.1}, namely
\begin{equation}\label{ms5.4}
 J\widehat{z}^\Delta(t) = \Big(\overline{\lambda}A(t)+B^*(t)\Big) z(t) + A(t)f(t), \quad t\in[t_0,\infty)_\T,
\end{equation}
we have $z_1=E_2^*z_1^\sigma-\mu E_2^*(\overline{\lambda}A_2+B^*_4)z_2-\mu E_2^*A_2f_2$ and thus the relation
\begin{equation}\label{zzwide}
 z(t)=\widetilde{H}(t)\widehat{z}^{\;\sigma}(t)+N^*(t)A(t)f(t), \quad t\in[t_0,\infty)_\T. 
\end{equation} 
Moreover, since $(\overline{\lambda}A+B^*)N^*+I_{2n}=H^*$ for $H$ in \eqref{ms2.2}, we see that \eqref{ms5.4} is equivalent to
$$ J\widehat{z}^{\;\Delta}(t) = H^*(t)\Big(\overline{\lambda}A(t)+B^*(t)\Big)\widehat{z}^{\;\sigma}(t) + H^*(t)A(t)f(t) $$
as in \eqref{ms2.6}, for $t\in[t_0,\infty)_\T$. Define
\begin{eqnarray*}
 &G(t,s,\lambda)&:=\begin{cases} \psi(t,\lambda)\chi^*(s,\lambda) &: t_0\le s<t<\infty, \\ 
                               \phi(t,\lambda)\zeta^*(s,\lambda)+N(t)\delta_{ts} &: t_0\le t\le s <\infty, \end{cases} \\
 &\widetilde{G}(t,s,\lambda)&:=\begin{cases} \chi(t,\lambda)\psi^*(s,\lambda) &: t_0\le t<s<\infty, \\ 
                               \zeta(t,\lambda)\phi^*(s,\lambda)+N^*(t)\delta_{ts} &: t_0\le s\le t <\infty, \end{cases} \\
 &&     = G^*(s,t,\lambda),
\end{eqnarray*}
where $\delta_{ts}$ is the Dirac delta function, i.e., the function that satisfies the sifting property
$$ \int_{t_0}^{\infty}f(s)\delta_{ts}\Delta s=f(t). $$
In a subsequent lemma we prove that $G$ and $\widetilde{G}$ are Green's matrices for \eqref{ms5.1} and \eqref{ms5.4}, respectively. For $f\in L_A^2(t_0,\infty)_\T$, define
\begin{eqnarray}
 && (R_{\lambda}f)(t):=\int_{t_0}^{\infty} G(t,s,\lambda)A(s)f(s)\Delta s, \label{bep5.6} \\
 && \left(\widetilde{R}_{\lambda}f\right)(t):=\int_{t_0}^{\infty} \widetilde{G}(t,s,\lambda)A(s)f(s)\Delta s.\label{bep5.7}
\end{eqnarray}
We assume that $R_{\lambda}$ and $\widetilde{R}_{\lambda}$ defined in \eqref{bep5.6} and \eqref{bep5.7} are one-to-one as operators from $L_A^2(t_0,\infty)_\T$ into itself, in other words,
\begin{eqnarray}
 && f\in L_A^2(t_0,\infty)_\T,\; A(R_{\xi}f)(t)=0\; \forall t\in(t_0,\infty)_\T\implies (Af)(t)=0 \; \forall t\in(t_0,\infty)_\T,  \label{bep5.42} \\
 && g\in L_A^2(t_0,\infty)_\T,\; A(\widetilde{R}_{\xi}g)(t)=0\; \forall t\in(t_0,\infty)_\T\implies (Ag)(t)=0 \; \forall t\in(t_0,\infty)_\T. \label{bep5.43} \\ 
\end{eqnarray}
As pointed out in \cite[p. 444]{b2}, it will become evident in the sequel that these same conditions hold when $\xi$ is replaced by any $\lambda\in\Lambda(\lambda_0,\mathscr{U}_{2n})$. We now define the following operators $L_{\xi}$ and $\widetilde{L}_{\xi}$ in a natural way. Set
\begin{eqnarray}
 \mathscr{D}(L_{\xi})&:=&\left\{y\in L_A^2(t_0,\infty)_\T: y\in \acloc[t_0,\infty)_\T, \right. \nonumber \\
 & & (ly)(t):= \begin{cases} J\widehat{y}^{\Delta}(t)-B(t)y(t)=A(t)f(t) &: t\in[\sigma(t_0),\infty)_\T, \\ \left(\begin{smallmatrix}-y_2(t_0)/\mu(t_0) \\ y_1^\Delta(t_0) \end{smallmatrix}\right)-B(t_0)y(t_0) = A(t_0)f(t_0) &: t=t_0>\rho(t_0), \end{cases}  \nonumber \\ 
 & & \left. \text{for some}\;f\in L_A^2(t_0,\infty)_\T,\;\text{ and }\;\lim_{t\rightarrow\infty}\widehat{\zeta}^*(t,\xi)J\widehat{y}(t)=0\right\}, \label{bep5.44}\\
 L_{\xi}y&:=&f\;\; \text{for}\; y\in \mathscr{D}(L_{\xi})\;\text{satisfying}\; ly=Af\;\text{on}\;[t_0,\infty)_\T, \label{Lxi}
\end{eqnarray}
where the second line in the definition of $(ly)$ holds only if $t_0$ is a left-scattered (and thus right-scattered) point, and if we take $y_2^\rho(t_0)=0$. Similarly, set
\begin{eqnarray}
 \mathscr{D}\left(\widetilde{L}_{\xi}\right)&:=&\left\{z\in L_A^2(t_0,\infty)_\T: z\in \acloc[t_0,\infty)_\T, \right. \nonumber \\
 & & (\widetilde{l}z)(t):= \begin{cases} J\widehat{z}^{\Delta}(t)-B^*(t)z(t)=A(t)g(t) &: t\in[\sigma(t_0),\infty)_\T, \\  \left(\begin{smallmatrix}-z_2(t_0)/\mu(t_0) \\ z_1^\Delta(t_0) \end{smallmatrix}\right)-B^*(t_0)z(t_0) = A(t_0)g(t_0) &: t=t_0>\rho(t_0), \end{cases} \nonumber \\
 & & \left. \text{for some}\;g\in L_A^2(t_0,\infty)_\T,\;\text{ and }\; \lim_{t\rightarrow\infty}\widehat{\psi}^*(t,\xi)J\widehat{z}(t)=0\right\}, \label{bep5.45}\\
 \widetilde{L}_{\xi}z&:=&g\;\; \text{for}\; z\in \mathscr{D}\left(\widetilde{L}_{\xi}\right)\;\text{satisfying}\; \widetilde{l}z=Ag\;\text{on}\;[t_0,\infty)_\T, \label{Ltildexi}
\end{eqnarray}
where the second line in the definition of $(\widetilde{l}y)$ holds only if $t_0$ is a left-scattered point, and if we take $z_2^\rho(t_0)=0$.
We remark that $\mathscr{D}(L_{\xi})$ and $\mathscr{D}\left(\widetilde{L}_{\xi}\right)$ consist of all equivalence classes in $L_A^2(t_0,\infty)_\T$ such that at least one representative of the class satisfies the conditions in the definitions of $\mathscr{D}(L_{\xi})$ and $\mathscr{D}\left(\widetilde{L}_{\xi}\right)$, respectively. In particular, using \eqref{bep5.42} and \eqref{bep5.43} we see that this representative is always unique, and thus $L_\xi$ and $\widetilde{L}_\xi$ are well defined. For more details, see the discussion in \cite[p. 445]{b2}.

%%%%%%%%%%%%%%%%%%%%%%%%%%%%%%%%%%%%
%                                  %
%         The Resolvents           %
%         Section 6                %
%%%%%%%%%%%%%%%%%%%%%%%%%%%%%%%%%%%%

\section{The Resolvent Sets}\label{sec6}

In this section we analyze the operators $L_{\xi}$ and $\widetilde{L}_{\xi}$ defined in the previous section in \eqref{Lxi} and \eqref{Ltildexi}, respectively, and establish their resolvents, which turn out to be $R_{\lambda}$ and $\widetilde{R}_{\lambda}$ from \eqref{bep5.6} and \eqref{bep5.7}, respectively. This operator $R_{\lambda}$ will have inverse operator properties relative to $L_{\xi}-\xi$, in particular that $L_{\xi}$ has resolvent set $\Lambda(\lambda_0,\mathscr{U}_{2n})$ with resolvent operator $R_{\lambda}$. In addition, we prove that $L_{\xi}$ and $\widetilde{L}_{\xi}$ are adjoints. 

% Lemma %

\begin{lemma}\label{lemma5.1}
Let $\lambda\in\Lambda(\lambda_0,\mathscr{U}_{2n})\cap\widetilde{\Lambda}(\lambda_0,\mathscr{U}^{-1}_{2n})$ and $f\in L_A^2(t_0,\infty)_\T$. Then $R_{\lambda}f$ is a solution of \eqref{ms5.1} and satisfies $(R_{\lambda}f)_2^\rho(t_0)=0_n$. In particular, it satisfies the boundary conditions $\widehat{\chi}^{\;*}(t_0)J\left(\widehat{R}_{\lambda}f\right)(t_0)=0$ and $\lim_{t\rightarrow\infty}\widehat{\zeta}^{\;*}(t,\lambda)J\left(\widehat{R}_{\lambda}f\right)(t)=0$, where
\begin{equation}\label{ms5.5}
 \left(\widehat{R}_{\lambda}f\right)(t)=\widehat{\psi}(t,\lambda)\int_{t_0}^{t}\chi^*(s,\lambda)A(s)f(s)\Delta s + \widehat{\phi}(t,\lambda)\int_{t}^{\infty}\zeta^*(s,\lambda)A(s)f(s)\Delta s, \quad t\in[t_0,\infty)_\T.
\end{equation}
\end{lemma}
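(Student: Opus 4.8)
The plan is to push everything onto the hatted representative \eqref{ms5.5} and to reduce the differential identity to a single constant Wronskian relation supplied by Lemma \ref{mslem2.4}. First I would split the integral \eqref{bep5.6} at $s=t$ along the two branches of $G$, evaluating the $N(t)\delta_{ts}$ term by the sifting property. Writing $u(t):=\int_{t_0}^t\chi^*Af\,\Delta s$ and $v(t):=\int_t^\infty\zeta^*Af\,\Delta s$ --- the first finite because $\chi$ is rd-continuous hence locally $A$-square integrable, the second convergent by Cauchy--Schwarz since $\zeta,f\in L_A^2(t_0,\infty)_\T$ for $\lambda\in\Lambda(\lambda_0,\mathscr{U}_{2n})\cap\widetilde{\Lambda}(\lambda_0,\mathscr{U}_{2n}^{-1})$ (Section \ref{WSnest}) --- this gives $(R_\lambda f)(t)=\psi(t)u(t)+\phi(t)v(t)+N(t)A(t)f(t)$. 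Since $\psi=H\widehat\psi$ and $\phi=H\widehat\phi$ by \eqref{ms2.1}, the bracket is $\widehat R_\lambda f=\widehat\psi u+\widehat\phi v$, which is \eqref{ms5.5}, and $(R_\lambda f)=H\widehat R_\lambda f+NAf$ has exactly the form \eqref{ms5.2}. By the equivalence of \eqref{ms5.1} and \eqref{ms5.3} established around \eqref{ms5.2}--\eqref{ms5.3}, it then suffices to verify that $\widehat R_\lambda f$ solves \eqref{ms5.3}.

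To do that I would delta-differentiate $\widehat R_\lambda f=\widehat\psi u+\widehat\phi v$ with the product rule $(fg)^\Delta=f^\Delta g+f^\sigma g^\Delta$, inserting $u^\Delta=\chi^*Af$ and $v^\Delta=-\zeta^*Af$, and multiply on the left by $J$. Because $\widehat\psi,\widehat\phi$ solve the homogeneous system, $J\widehat\psi^\Delta=(\lambda A+B)H\widehat\psi$ and similarly for $\widehat\phi$, so the homogeneous contribution collapses to $(\lambda A+B)H\widehat R_\lambda f$ and the remainder is $J(\widehat\psi^\sigma\chi^*-\widehat\phi^\sigma\zeta^*)Af$. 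Thus matching \eqref{ms5.3} comes down to the single pointwise identity
\[
 J\bigl(\widehat\psi^\sigma\chi^*-\widehat\phi^\sigma\zeta^*\bigr)=\widetilde H^*.
\]

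Proving this identity is the heart of the matter, and the step I expect to be most delicate because of the interplay of signs, $\sigma$-shifts, and the hatted/un-hatted bookkeeping. Lemma \ref{mslem2.4} gives $\widehat Z=-J(\widehat Y^{-1})^*J$, whence the constant Wronskians $\widehat Z^*J\widehat Y=\widehat YJ\widehat Z^*=J$; the blocks of the first yield $\widehat\eta^*J\widehat\theta=0_n$, $\widehat\eta^*J\widehat\phi=-I_n$, $\widehat\chi^*J\widehat\theta=I_n$, and $\widehat\chi^*J\widehat\phi=0_n$. Substituting $\widehat\psi=\widehat\theta+\widehat\phi M$ and $\widehat\zeta=\widehat\eta+\widehat\chi M^*$, the $M$-terms cancel, giving $\widehat\psi\widehat\chi^*-\widehat\phi\widehat\zeta^*=\widehat\theta\widehat\chi^*-\widehat\phi\widehat\eta^*=-\widehat YJ\widehat Z^*=-J$ at every point, hence at $\sigma(t)$. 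Recovering the un-hatted solutions through \eqref{ms2.5} as $\chi=\widetilde H\widehat\chi^\sigma$ and $\zeta=\widetilde H\widehat\zeta^\sigma$, so $\chi^*=\widehat\chi^{\sigma*}\widetilde H^*$ and $\zeta^*=\widehat\zeta^{\sigma*}\widetilde H^*$, I obtain $J(\widehat\psi^\sigma\chi^*-\widehat\phi^\sigma\zeta^*)=J(\widehat\psi^\sigma\widehat\chi^{\sigma*}-\widehat\phi^\sigma\widehat\zeta^{\sigma*})\widetilde H^*=J(-J)\widetilde H^*=\widetilde H^*$, as required.

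Finally I would read off the boundary data. At $t=t_0$ we have $u(t_0)=0$, while $\widehat Y(t_0)=\widehat Z(t_0)=J$ from \eqref{ms2.7} and Remark \ref{rmk1.4} give $\widehat\phi(t_0)=\widehat\chi(t_0)=\bigl(\begin{smallmatrix}-I_n\\0_n\end{smallmatrix}\bigr)$; hence $(\widehat R_\lambda f)(t_0)=\widehat\phi(t_0)v(t_0)=\bigl(\begin{smallmatrix}-v(t_0)\\0_n\end{smallmatrix}\bigr)$, whose lower block is the asserted $(R_\lambda f)_2^\rho(t_0)=0_n$, and $\widehat\chi^*(t_0)J(\widehat R_\lambda f)(t_0)=\widehat\chi^*(t_0)J\widehat\phi(t_0)\,v(t_0)=0$ by $\widehat\chi^*J\widehat\phi=0_n$. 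For the limit at infinity, applying the same block relations to $\widehat\zeta=\widehat\eta+\widehat\chi M^*$ gives the constants $\widehat\zeta^*J\widehat\psi\equiv0_n$ and $\widehat\zeta^*J\widehat\phi\equiv-I_n$, so $\widehat\zeta^*(t)J(\widehat R_\lambda f)(t)=\widehat\zeta^*J\widehat\psi\,u(t)+\widehat\zeta^*J\widehat\phi\,v(t)=-v(t)\to0$ as $t\to\infty$, since $v(t)$ is the tail of a convergent integral; this is consistent with \eqref{ms4.3}.
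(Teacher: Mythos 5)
Your proposal is correct and follows essentially the same route as the paper: the paper obtains \eqref{ms5.5} by variation of parameters ($\widehat{U}=\widehat{Y}\mathscr{M}$) and then verifies the inhomogeneous equation and boundary conditions using Lemma \ref{mslem2.4}, the relation \eqref{ms2.5}, the identity $(\lambda A+B)N+I_{2n}=\widetilde{H}^*$, and the Wronskian facts $\widehat{\zeta}^*J\widehat{\psi}=0_n$, $\widehat{\zeta}^*J\widehat{\phi}=-I_n$ as in \eqref{ms5.6} --- exactly the ingredients you use. Your only deviation is cosmetic: you start from the Green's-function definition \eqref{bep5.6} and verify \eqref{ms5.3} directly, consolidating the paper's computation $-J\widehat{Y}^\sigma JZ^*=\widetilde{H}^*$ into the single pointwise identity $J\bigl(\widehat{\psi}^\sigma\chi^*-\widehat{\phi}^\sigma\zeta^*\bigr)=\widetilde{H}^*$, which is a correct repackaging of the same argument.
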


\begin{proof}
We use the method of variation of parameters. Let $\widehat{U}\in\C^{2n,2n}$ have the form $\widehat{U}=\widehat{Y}\mathscr{M}$, where $\mathscr{M}\in\C^{2n,2n}$ is to be determined and $Y=(\theta|\phi)$ is a fundamental matrix solution for the corresponding homogeneous system \eqref{ms2.3} satisfying initial condition \eqref{ms2.7}. By the delta product rule,
$$ \widehat{U}^\Delta(t)=\widehat{Y}^{\sigma}(t)\mathscr{M}^\Delta(t) + \widehat{Y}^\Delta(t)\mathscr{M}(t). $$
Multiply by $J$, use \eqref{ms2.3}, and assume $\widehat{U}$ satisfies \eqref{ms5.3} to obtain
$$ J\widehat{Y}^{\sigma}(t)\mathscr{M}^\Delta(t) = \widetilde{H}^*(t)A(t)f(t). $$
Then by \eqref{ms2.5} and Lemma \ref{mslem2.4} we have 
\begin{eqnarray*}
 \mathscr{M}^\Delta(t) &=& -(\widehat{Y}^{\sigma})^{-1}(t)J\widetilde{H}^*(t)A(t)f(t) \\
 &=& -J\widehat{Z}^{\sigma*}(t)\widetilde{H}^*(t)A(t)f(t)  \\
 &=& -JZ^*(t)A(t)f(t) \\
 &=& \begin{pmatrix} 0_n & I_n \\ 0_n & M(\lambda) \end{pmatrix}Z^*(t)A(t)f(t) - \begin{pmatrix} 0_n & 0_n \\ I_n & M(\lambda) \end{pmatrix}Z^*(t)A(t)f(t).
\end{eqnarray*}
Consequently we have, up to addition of a constant matrix,
\begin{eqnarray*}
 \mathscr{M}(t) &=& \int_{t_0}^{t} \left(\begin{smallmatrix} 0_n & I_n \\ 0_n & M(\lambda) \end{smallmatrix}\right)Z^*(s)A(s)f(s)\Delta s +  \int_{t}^{\infty} \left(\begin{smallmatrix} 0_n & 0_n \\ I_n & M(\lambda) \end{smallmatrix}\right)Z^*(s)A(s)f(s)\Delta s \\
 &=& \int_{t_0}^{t} \left(\begin{smallmatrix} I_n \\ M(\lambda) \end{smallmatrix}\right)\chi^*(s)A(s)f(s)\Delta s + \int_{t}^{\infty} \left(\begin{smallmatrix} 0_n \\ \eta^*(s)+M(\lambda)\chi^*(s) \end{smallmatrix}\right)A(s)f(s)\Delta s,
\end{eqnarray*}
since $Z=(\eta|\chi)$, so that
\begin{eqnarray}
 \widehat{U}(t) &=& \widehat{Y}(t)\mathscr{M}(t) = \left(\widehat{\theta}(t)|\widehat{\phi}(t)\right)\mathscr{M}(t) \nonumber \\
 &=& \widehat{\psi}(t)\int_{t_0}^{t}\chi^*(s)A(s)f(s)\Delta s + \widehat{\phi}(t)\int_{t}^{\infty} \zeta^*(s)A(s)f(s)\Delta s, \label{Uhatsol}
\end{eqnarray}
where $\psi$ and $\zeta$ are as defined in Theorem \ref{thm4.1}. It follows from \eqref{ms2.1} and \eqref{ms5.2} that
\begin{eqnarray}
 U(t) &=& H(t)\widehat{U}(t) + N(t)A(t)f(t) \nonumber \\
 &=& \psi(t)\int_{t_0}^{t}\chi^*(s)A(s)f(s)\Delta s + \phi(t)\int_{t}^{\infty} \zeta^*(s)A(s)f(s)\Delta s + N(t)A(t)f(t) \nonumber \\
 &=& (R_{\lambda}f)(t). \label{UdefRlam}
\end{eqnarray}
We show that $U$ solves \eqref{ms5.1}. Using \eqref{Uhatsol}, we have (supressing the $t$)
\begin{eqnarray*}
 J\widehat{U}^\Delta &=& J\left[\widehat{\psi}^\sigma \chi^*-\widehat{\phi}^\sigma\zeta^*\right]Af + J\left[\widehat{\psi}^\Delta\int_{t_0}^t\chi^*(s)A(s)f(s)\Delta s +\widehat{\phi}^\Delta\int_{t}^{\infty} \zeta^*(s)A(s)f(s)\Delta s \right] \\
 &=& J\left[\widehat{\theta}^\sigma \chi^*-\widehat{\phi}^\sigma\eta^*\right]Af + (\lambda A+B)(U-NAf) \\
 &=& -J\widehat{Y}^\sigma JZ^*Af + (\lambda A+B)(U-NAf) \\
 &=& (\lambda A+B)U + Af,
\end{eqnarray*}
using \eqref{ms2.5}, Lemma \ref{mslem2.4} and the fact that $(\lambda A+B)N+I_{2n}=\widetilde{H}^*$ by \eqref{Ndef}.

Continuing from \eqref{ms2.7}, \eqref{Uhatsol} and \eqref{UdefRlam}, we have
$$ \widehat{(R_{\lambda}f)}(t_0)=\left(\widehat{R}_{\lambda}f\right)(t_0) = \widehat{\phi}(t_0,\lambda)\int_{t_0}^{\infty} \zeta^*(s)A(s)f(s)\Delta s = \begin{pmatrix} - \displaystyle\int_{t_0}^{\infty} \zeta^*(s)A(s)f(s)\Delta s \\ 0_n \end{pmatrix}, $$
so that we have $(R_\lambda f)_2^\rho(t_0)=0_n$. The identity 
$$ \widehat{\chi}^*(t_0)J\left(\widehat{R}_{\lambda}f\right)(t_0) = \widehat{\chi}^*(t_0)J\widehat{\phi}(t_0)\int_{t_0}^{\infty}\zeta^*(t)A(t)f(t) \Delta t = 0 $$
follows immediately from the initial values of $\chi$ and $\phi$ in \eqref{ms2.7}. Note that the integral is convergent as each column of $\zeta$ is $A-$square integrable and $f\in L_A^2(t_0,\infty)_\T$. In addition, since $\widehat{\psi}(t)=\widehat{Y}(t)\left(\begin{smallmatrix}I_n \\ M \end{smallmatrix}\right)$ and $\widehat{\zeta}(t)=\widehat{Z}(t)\left(\begin{smallmatrix}I_n \\ M^* \end{smallmatrix}\right)$ we obtain from Lemma \ref{mslem2.4} the equalities
$$ \widehat{\zeta}^*(t,\lambda)J\widehat{\psi}(t,\lambda) = (I_n|M)\widehat{Z}^*(t,\lambda)J\widehat{Y}(t,\lambda)\left(\begin{smallmatrix}I_n \\ M \end{smallmatrix}\right) = 0_n $$
and
\begin{equation}\label{ms5.6}
 \widehat{\zeta}^*(t,\lambda)J\widehat{\phi}(t,\lambda) = (I_n|M)\widehat{Z}^*(t,\lambda)J\widehat{Y}(t,\lambda)\left(\begin{smallmatrix}0_n \\ I_n \end{smallmatrix}\right) = -I_n.
\end{equation}
It thus follows that
$$ \lim_{t\rightarrow\infty} \widehat{\zeta}^*(t,\lambda)J\left(\widehat{R}_{\lambda}f\right)(t) = -\lim_{t\rightarrow\infty} \int_{t}^{\infty} \zeta^*(s)A(s)f(s)\Delta s = 0, $$
completing the proof.
\end{proof}

% Remark %

\begin{remark}
Note that 
$$ \widehat{(R_{\lambda}f)}(t)=\left(\widehat{R}_{\lambda}f\right)(t)= H^{-1}(t)\left[(R_{\lambda}f)-NAf\right](t) $$ 
for $t\in[t_0,\infty)_\T$ by \eqref{ms5.2}. For the adjoint system \eqref{ms5.4}, we may show in a similar manner that
$$ J\left(\widetilde{R}_{\lambda}f\right)^\Delta(t) = \left(\overline{\lambda}A(t)+B^*(t)\right)\left(\widetilde{R}_{\lambda}f\right)(t) + A(t)f(t), $$
with boundary conditions $\widehat{\phi}^*(t_0)J\left(\widehat{\widetilde{R}}_{\lambda}f\right)(t_0)=0$ and $\lim_{t\rightarrow\infty}\widehat{\psi}^*(t,\lambda)J\left(\widehat{\widetilde{R}}_{\lambda}f\right)(t)=0$. 
\end{remark}

% Theorem %

\begin{theorem}
Let $\lambda\in\Lambda(\lambda_0,\mathscr{U}_{2n})\cap\widetilde{\Lambda}(\lambda_0,\mathscr{U}^{-1}_{2n})$ and $f\in L_A^2(t_0,\infty)_\T$. If we set $\Phi=R_{\lambda}f$ and $\widetilde{A}(t) = \mathscr{U}_{2n}A(t)\mathscr{U}^*_{2n}$, then
$$ \|\Phi\|^2_{W(\lambda_0)} + (\delta-\varepsilon) \|\Phi\|^2_{\widetilde{A}} \;\le\; \frac{1}{4\varepsilon}\|f\|_A^2 $$
for any $0<\varepsilon<\delta$, with $\delta=\delta(\lambda)$ as in \eqref{ms3.3}, and
$$ \|\Phi\|^2_{\widetilde{A}}\;\le\;\frac{1}{\delta}\|f\|_{A}. $$
In particular, since $\widetilde{A}(t)\asymp A(t)$ for $t\in[t_0,\infty)_\T$, $R_{\lambda}$ is bounded on $L_A^2(t_0,\infty)_\T$.
\end{theorem}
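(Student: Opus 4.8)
The plan is to run the energy (Lagrange-type) argument that produced \eqref{ms3.1}, but now for the inhomogeneous Weyl solution $\Phi=R_{\lambda}f$, and then to convert the resulting identity into the stated bounds using a lower estimate on $W(t,\lambda)$ together with a weighted Cauchy--Schwarz inequality. By Lemma \ref{lemma5.1}, $\Phi\in\mathscr{D}$ solves \eqref{ms5.1}, i.e. $J\widehat{\Phi}^\Delta=(\lambda A+B)\Phi+Af$; it is $W(\lambda)$--square integrable (hence, by \eqref{ms3.5} and \eqref{ms4.1}, also $\widetilde{A}$-- and $A$--square integrable); and it satisfies the boundary data $\Phi_2^\rho(t_0)=0_n$ together with $\lim_{t\to\infty}\widehat{\zeta}^*(t,\lambda)J\widehat{\Phi}(t)=0$.

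First I would establish the weighted Lagrange identity
$$\left[\widehat{\Phi}^*\mathscr{U}_{2n}J\widehat{\Phi}\right]^\Delta = 2\,\Phi^*W(t,\lambda)\Phi + 2\,\re\left[\Phi^*\mathscr{U}_{2n}Af\right],$$
exactly as in the derivation of \eqref{ms3.1}. The purely algebraic identity $\bigl[\widehat{\Phi}^*\mathscr{U}_{2n}J\widehat{\Phi}\bigr]^\Delta=\Phi^*\mathscr{U}_{2n}J\widehat{\Phi}^\Delta+(\mathscr{U}_{2n}J\widehat{\Phi}^\Delta)^*\Phi$ holds for every $\Phi\in\mathscr{D}$ (Theorem \ref{ibp}) because $\mathscr{U}_{2n}$ is constant, so it commutes with the hat operator, and $J\mathscr{U}_{2n}^*=-\mathscr{U}_{2n}J$; substituting \eqref{ms5.1} and using $W=\re[\mathscr{U}_{2n}(\lambda A+B)]$ yields the display. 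Integrating from $t_0$ to $T$, the boundary term at $t_0$ vanishes since $\Phi_2^\rho(t_0)=0_n$ forces $\widehat{\Phi}^*(t_0)\mathscr{U}_{2n}J\widehat{\Phi}(t_0)=0$, and both integrals on the right converge as $T\to\infty$ (the first by $W(\lambda)$--square integrability, the second by Cauchy--Schwarz). Hence $F(\infty):=\lim_{T\to\infty}\widehat{\Phi}^*(T)\mathscr{U}_{2n}J\widehat{\Phi}(T)$ exists.

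The main obstacle is to show $F(\infty)\le 0$. Here I would use the nesting property of Theorem \ref{msthm3.2}: writing $\widehat{\Phi}=\widehat{\psi}\,\alpha+\widehat{\phi}\,\beta$ with $\alpha(t)=\int_{t_0}^t\chi^*Af\to c$ and $\beta(t)=\int_t^\infty\zeta^*Af\to 0$ (so that $\widehat{\zeta}^*J\widehat{\Phi}=-\beta$ by $\widehat{\zeta}^*J\widehat{\psi}=0$ and \eqref{ms5.6}), the leading contribution is $\alpha^*\bigl(\widehat{\psi}^*\mathscr{U}_{2n}J\widehat{\psi}\bigr)\alpha$. Since $M(\lambda)\in D(\infty,\lambda)\subseteq D(t,\lambda)$ for all $t\ge t_1$, definition \eqref{ms3.4} gives $\widehat{\psi}^*\mathscr{U}_{2n}J\widehat{\psi}\le 0$, while the homogeneous identity $[\widehat{\psi}^*\mathscr{U}_{2n}J\widehat{\psi}]^\Delta=2\psi^*W\psi\ge 0$ shows it is non-decreasing and so converges to a limit $L\le 0$; thus the leading term tends to $c^*Lc\le 0$. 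The delicate point is that the cross terms and $\beta^*(\widehat{\phi}^*\mathscr{U}_{2n}J\widehat{\phi})\beta=2\beta^*P\beta$ must be shown to vanish even when $P(t,\lambda)$ is unbounded, controlling the tail-decay of $\beta$ against the growth of $P$ as in the limit-point/limit-circle analysis of \cite{b2} and \cite{ms}; I expect this to be the only step requiring genuine care.

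Granting $F(\infty)\le 0$, the integrated identity yields $\int_{t_0}^\infty\Phi^*W(t,\lambda)\Phi\,\Delta t\le\bigl|\int_{t_0}^\infty\Phi^*\mathscr{U}_{2n}Af\,\Delta t\bigr|$. For the left side, since $\lambda\in\Lambda(\lambda_0,\mathscr{U}_{2n})$ we have $W(t,\lambda)=W(t,\lambda_0)+\re[(\lambda-\lambda_0)\mathscr{U}_{2n}A]\ge W(t,\lambda_0)+\delta\widetilde{A}$ by \eqref{ms3.3}, with $W(t,\lambda_0)\ge 0$ by \eqref{ms3.2}, so the left side dominates $\|\Phi\|_{W(\lambda_0)}^2+\delta\|\Phi\|_{\widetilde{A}}^2$. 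For the right side, write $\Phi^*\mathscr{U}_{2n}Af=(\mathscr{U}_{2n}^*\Phi)^*Af$ and apply Cauchy--Schwarz in $L_A^2$, noting $\|\mathscr{U}_{2n}^*\Phi\|_A^2=\int\Phi^*\widetilde{A}\Phi\,\Delta t=\|\Phi\|_{\widetilde{A}}^2$, to bound it by $\|\Phi\|_{\widetilde{A}}\|f\|_A$. Combining gives $\|\Phi\|_{W(\lambda_0)}^2+\delta\|\Phi\|_{\widetilde{A}}^2\le\|\Phi\|_{\widetilde{A}}\|f\|_A$; Young's inequality $\|\Phi\|_{\widetilde{A}}\|f\|_A\le\varepsilon\|\Phi\|_{\widetilde{A}}^2+\tfrac{1}{4\varepsilon}\|f\|_A^2$ produces the first asserted inequality, while discarding the nonnegative term $\|\Phi\|_{W(\lambda_0)}^2$ and dividing by $\|\Phi\|_{\widetilde{A}}$ gives $\|\Phi\|_{\widetilde{A}}\le\tfrac{1}{\delta}\|f\|_A$, the second. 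Finally, $\widetilde{A}\asymp A$ from \eqref{ms4.1} converts this into $\|R_{\lambda}f\|_A\le C\|f\|_A$, so $R_{\lambda}$ is bounded on $L_A^2(t_0,\infty)_\T$.
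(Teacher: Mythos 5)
Your overall architecture is sound and is in fact the same as that of the proofs the paper defers to (the theorem's proof is omitted here, citing \cite[Theorem 5.1]{b2} and \cite[Theorem 5.3]{ms}): the weighted Lagrange identity $\bigl[\widehat{\Phi}^*\mathscr{U}_{2n}J\widehat{\Phi}\bigr]^\Delta = 2\Phi^*W(\cdot,\lambda)\Phi + 2\re[\Phi^*\mathscr{U}_{2n}Af]$ is correct, the boundary term at $t_0$ does vanish because $(R_\lambda f)_2^\rho(t_0)=0_n$, the lower bound $W(t,\lambda)\ge W(t,\lambda_0)+\delta\widetilde{A}(t)$ follows from \eqref{ms3.3} exactly as you say, and your Cauchy--Schwarz/Young endgame is correct. (Indeed your derivation produces $\|\Phi\|_{\widetilde{A}}\le\delta^{-1}\|f\|_A$, which is the dimensionally consistent form; the exponent in the theorem's second display appears to be a typo.)

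However, the step you flag and leave open --- showing $F(\infty)\le 0$ --- is a genuine gap, and the route you sketch for it would fail. Decomposing $\widehat{\Phi}=\widehat{\psi}\alpha+\widehat{\phi}\beta$ and trying to show $\beta^*(t)P(t,\lambda)\beta(t)\to 0$ cannot work in general: for $f\in L_A^2$ one only knows $\beta(t)=\int_t^\infty\zeta^*Af\,\Delta s\to 0$ with no rate, while in the limit-point-type situation $P(t,\lambda)$ is unbounded, so the product is not controllable; the same applies to the cross terms. The proofs in \cite{b2} and \cite{ms} avoid the limit at infinity entirely by truncation: replace $f$ by $f_X:=f\chi_{[t_0,X]_\T}$, so that $\Phi_X:=R_\lambda f_X$ equals $\psi(\cdot,\lambda)c_X$ with the \emph{constant} vector $c_X=\int_{t_0}^X\chi^*Af\,\Delta s$ for $t\ge X$. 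Since $M(\lambda)\in D(\infty,\lambda)\subseteq D(T,\lambda)$ for every $T$, definition \eqref{ms3.4} gives $\widehat{\Phi}_X^*(T)\mathscr{U}_{2n}J\widehat{\Phi}_X(T)=c_X^*\widehat{\psi}^*(T)\mathscr{U}_{2n}J\widehat{\psi}(T)c_X\le 0$ at every \emph{finite} $T\ge X$, so the integrated identity on $[t_0,T]$ immediately yields
$$ \int_{t_0}^{T}\Phi_X^*W(s,\lambda)\Phi_X\,\Delta s\;\le\;\re\int_{t_0}^{T}\Phi_X^*\mathscr{U}_{2n}A f_X\,\Delta s, $$
and your Cauchy--Schwarz/Young argument gives both asserted bounds for $\Phi_X$ with $\|f_X\|_A\le\|f\|_A$, uniformly in $X$. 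Since $\Phi_X(t)\to\Phi(t)$ pointwise as $X\to\infty$ (the tail integral $\int_t^X\zeta^*Af\,\Delta s$ converges), Fatou's lemma transfers the bounds to $\Phi$. With that replacement for your third paragraph, the proof is complete; no asymptotic analysis of $P$ or of the decay of $\beta$ is needed, which is precisely why the nesting property $M(\lambda)\in\bigcap_t D(t,\lambda)$, rather than any statement at $t=\infty$, is the decisive ingredient.
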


\begin{proof}
The proof is similar to that given in the continuous case \cite[Theorem 5.1]{b2} and the discrete case \cite[Theorem 5.3]{ms}, and thus is omitted.
\end{proof}

% Lemma %

\begin{lemma}
If $\lambda\in\Lambda(\lambda_0,\mathscr{U}_{2n})$, then $R_{\lambda}$ is an injective linear operator defined on $L_A^2(t_0,\infty)_\T$. In addition, for any $\xi\in\Lambda(\lambda_0,\mathscr{U}_{2n})$ and $L_\xi$ as in \eqref{Lxi}, we have $\range R_{\xi}\subset \mathscr{D}(L_\xi)$ and $(L_{\xi}-\xi)R_{\xi} f=f$ for all $f\in L_A^2(t_0,\infty)_\T$. A similar statement holds for $\widetilde{R}_{\xi}$.
\end{lemma}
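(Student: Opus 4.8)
The plan is to split the claim into three parts: that $R_\lambda$ is a well-defined injective linear map on $L_A^2(t_0,\infty)_\T$; that $y:=R_\xi f$ lies in $\mathscr D(L_\xi)$ for every $f\in L_A^2(t_0,\infty)_\T$; and that the resolvent identity $(L_\xi-\xi)R_\xi f=f$ holds. Linearity is immediate from the integral formula \eqref{bep5.6}. That $R_\lambda$ maps $L_A^2(t_0,\infty)_\T$ into itself is the content of the boundedness theorem proved just above; although that theorem and Lemma~\ref{lemma5.1} are stated for $\lambda\in\Lambda(\lambda_0,\mathscr U_{2n})\cap\widetilde\Lambda(\lambda_0,\mathscr U_{2n}^{-1})$, the standing hypothesis \eqref{ms4.1} gives $\widetilde A\asymp A$ and hence, by \eqref{ms3.13} with equality, $\Lambda(\lambda_0,\mathscr U_{2n})\subseteq\widetilde\Lambda(\lambda_0,\mathscr U_{2n}^{-1})$; so $\lambda\in\Lambda(\lambda_0,\mathscr U_{2n})$ already places us in the intersection. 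For injectivity I would argue that $\|R_\lambda f\|_A=0$ forces $(R_\lambda f)^*A(R_\lambda f)=0$; writing $A=A^{1/2}A^{1/2}$ with $A\ge 0$ this gives $A^{1/2}R_\lambda f=0$ and thus $A R_\lambda f=0$ a.e., and hence everywhere by rd-continuity, whereupon the injectivity hypothesis \eqref{bep5.42} (in the form valid for the general $\lambda$ noted in the text) yields $Af=0$, i.e. $f=0$ in $L_A^2(t_0,\infty)_\T$.

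For the inclusion $\range R_\xi\subset\mathscr D(L_\xi)$, fix $f\in L_A^2(t_0,\infty)_\T$ and set $y:=R_\xi f$. The strategy is to check, one by one, the conditions defining $\mathscr D(L_\xi)$ in \eqref{bep5.44}. Membership $y\in L_A^2(t_0,\infty)_\T$ is the boundedness just established, and $y\in\acloc[t_0,\infty)_\T$ follows from the representation \eqref{ms5.5}--\eqref{UdefRlam}, since $\psi,\phi,\chi,\zeta$ are delta-differentiable solutions and the two integrals are absolutely continuous. By Lemma~\ref{lemma5.1} with $\lambda=\xi$, $y$ solves \eqref{ms5.1}, which rearranges on $[\sigma(t_0),\infty)_\T$ to $J\widehat y^\Delta-By=A(\xi y+f)$; thus the first line of $(ly)$ equals $Ag$ with $g:=\xi y+f\in L_A^2(t_0,\infty)_\T$. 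The same lemma gives $(R_\xi f)_2^\rho(t_0)=0_n$, which is exactly the normalization $y_2^\rho(t_0)=0$ required in \eqref{bep5.44}; a short computation of $\widehat y^\Delta(t_0)=[\widehat y^\sigma(t_0)-\widehat y(t_0)]/\mu(t_0)$ using $\widehat y=(y_1,y_2^\rho)^\trans$, $(y_2^\rho)^\sigma=y_2$ and $y_2^\rho(t_0)=0$ then shows that $J\widehat y^\Delta-By$ specializes at a left-scattered $t_0$ to the second-line form $\big(-y_2(t_0)/\mu(t_0),\,y_1^\Delta(t_0)\big)^\trans-B(t_0)y(t_0)$, confirming $(ly)(t_0)=A(t_0)g(t_0)$ there as well. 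Finally, the boundary condition $\lim_{t\to\infty}\widehat\zeta^*(t,\xi)J\widehat y(t)=0$ is furnished verbatim by Lemma~\ref{lemma5.1}. Hence $y\in\mathscr D(L_\xi)$.

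With $y\in\mathscr D(L_\xi)$ in hand, the resolvent identity is essentially read off: since $ly=Ag$ with $g=\xi y+f$, the definition \eqref{Lxi} gives $L_\xi y=g=\xi y+f$ (the representative being unique by \eqref{bep5.42}, so $L_\xi$ is unambiguous), whence $(L_\xi-\xi)R_\xi f=(L_\xi-\xi)y=f$. The statement for $\widetilde R_\xi$ is proved in the identical manner, now invoking the adjoint system \eqref{ms5.4}, the remark following Lemma~\ref{lemma5.1} (which supplies $(\widetilde R_\xi f)_2^\rho(t_0)=0$ and the boundary condition $\lim_{t\to\infty}\widehat\psi^*(t,\xi)J(\widehat{\widetilde R}_\xi f)(t)=0$), and the injectivity hypothesis \eqref{bep5.43}. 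The step I expect to require the most care is the verification of the defining conditions at the initial point $t_0$ in \eqref{bep5.44}: one must match the abstract operator $l$ against its special left-scattered form, and this matching depends crucially on the vanishing $(R_\xi f)_2^\rho(t_0)=0$ extracted from Lemma~\ref{lemma5.1}. Everything else is bookkeeping that transfers essentially unchanged from the continuous and discrete treatments.
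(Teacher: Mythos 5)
Your proposal is correct and takes essentially the same route as the paper, which simply defers to the discrete-case proof in \cite[Lemma 5.4]{ms}: solution and boundary properties of $R_\xi f$ from Lemma~\ref{lemma5.1}, membership in $L_A^2(t_0,\infty)_\T$ from the boundedness theorem, and injectivity together with well-definedness of $L_\xi$ from the hypotheses \eqref{bep5.42}--\eqref{bep5.43}. The details you supply --- passing from $\lambda\in\Lambda(\lambda_0,\mathscr{U}_{2n})$ to the intersection $\Lambda(\lambda_0,\mathscr{U}_{2n})\cap\widetilde{\Lambda}(\lambda_0,\mathscr{U}_{2n}^{-1})$ via the standing assumption \eqref{ms4.1}, and checking that $(R_\xi f)_2^\rho(t_0)=0$ reconciles the left-scattered second line of \eqref{bep5.44} with $J\widehat{y}^\Delta - By$ --- are precisely the time-scale-specific points the citation leaves implicit.
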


\begin{proof}
See the proof in \cite[Lemma 5.4]{ms}.
\end{proof}

% Remark %

\begin{remark}
In the following lemma, it is traditional to use the Greek letter $\rho$ to represent a resolvent set, but since we already employ $\rho$ as a backward jump operator on time scales, we will use $r$ instead.
\end{remark}

% Lemma %

\begin{lemma}
Denoting the resolvent sets of $L_\xi$ and $\widetilde{L}_\xi$ by $r(L_\xi)$ and $r\left(\widetilde{L}_\xi\right)$, respectively, we have $\xi\in r(L_\xi)$, $\overline{\xi}\in r\left(\widetilde{L}_\xi\right)$, $\range R_{\xi}=\mathscr{D}(L_\xi)$, $\range\widetilde{R}_{\xi}=\mathscr{D}\left(\widetilde{L}_\xi\right)$, $(L_{\xi}-\xi)^{-1}=R_{\xi}$ and $\left(\widetilde{L}_{\xi}-\overline{\xi}\right)^{-1}=\widetilde{R}_{\xi}$. In addition, $L_\xi$ and $\widetilde{L}_\xi$ are closed.
\end{lemma}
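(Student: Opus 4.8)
The plan is to leverage the preceding lemma, which already supplies that $R_\xi$ is injective on $L_A^2(t_0,\infty)_\T$, that $\range R_\xi\subset\mathscr{D}(L_\xi)$, and that $(L_\xi-\xi)R_\xi f=f$ for every $f$, so that $R_\xi$ is a right inverse of $L_\xi-\xi$; the earlier boundedness theorem adds that $R_\xi$ is bounded on all of $L_A^2(t_0,\infty)_\T$ (using $\widetilde A\asymp A$, which via Remark \eqref{ms3.13} puts $\xi\in\Lambda(\lambda_0,\mathscr{U}_{2n})\cap\widetilde\Lambda(\lambda_0,\mathscr{U}_{2n}^{-1})$). All of the asserted equalities and the closedness will then drop out once I establish the single new fact that $L_\xi-\xi$ is \emph{injective} on $\mathscr{D}(L_\xi)$. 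This injectivity is the heart of the matter.

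To prove it, suppose $y\in\mathscr{D}(L_\xi)$ satisfies $(L_\xi-\xi)y=0$. Unravelling \eqref{bep5.44}, this says $ly=\xi Ay$, i.e.\ $J\widehat y^\Delta=(\xi A+B)y$, so $y$ solves the homogeneous system \eqref{maineq} at $\lambda=\xi$ and hence $\widehat y=\widehat\theta\,a+\widehat\phi\,b$ for constant vectors $a,b\in\C^n$. Since $\widehat Y(t_0)=J$, the initial value is $\widehat y(t_0)=(-b,\,a)^{\trans}$, so $y_2^\rho(t_0)=a$, and the left-endpoint condition $y_2^\rho(t_0)=0$ built into $\mathscr{D}(L_\xi)$ forces $a=0$; thus $\widehat y=\widehat\phi\,b$. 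Now \eqref{ms5.6} shows $\widehat\zeta^*(t,\xi)J\widehat\phi(t,\xi)=-I_n$ is \emph{constant} in $t$, a consequence of Lemma \ref{mslem2.4} (indeed $\widehat Z^*J\widehat Y\equiv J$); consequently the condition $\lim_{t\to\infty}\widehat\zeta^*(t,\xi)J\widehat y(t)=0$ from $\mathscr{D}(L_\xi)$ collapses to the algebraic equation $-b=0$. Hence $\widehat y\equiv0$ and $y=H\widehat y=0$, proving injectivity.

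Everything else is now formal. Given $y\in\mathscr{D}(L_\xi)$, set $g:=(L_\xi-\xi)y\in L_A^2(t_0,\infty)_\T$; then $(L_\xi-\xi)(R_\xi g-y)=g-g=0$ with $R_\xi g-y\in\mathscr{D}(L_\xi)$, so injectivity gives $y=R_\xi g$. Thus $\mathscr{D}(L_\xi)\subseteq\range R_\xi$, which with the reverse inclusion yields $\range R_\xi=\mathscr{D}(L_\xi)$ and the left-inverse identity $R_\xi(L_\xi-\xi)=\mathrm{id}$; combined with the right-inverse identity this is exactly $(L_\xi-\xi)^{-1}=R_\xi$. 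Since $R_\xi$ is bounded and everywhere defined, this places $\xi$ in the resolvent set $r(L_\xi)$. Closedness follows from the standard principle that the inverse of a bounded, everywhere-defined operator is closed: the graph of $L_\xi-\xi=R_\xi^{-1}$ is the coordinate flip of the closed graph of the bounded $R_\xi$, so $L_\xi-\xi$, and therefore $L_\xi$, is closed. The statements for $\widetilde L_\xi$, $\widetilde R_\xi$, and $\overline\xi$ are obtained by the identical argument applied to the adjoint data, starting from $(\widetilde L_\xi-\overline\xi)\widetilde R_\xi f=f$ and the boundary conditions in \eqref{bep5.45}. The main obstacle is precisely the injectivity step, and within it the point that the two side conditions of $\mathscr{D}(L_\xi)$ decouple cleanly onto $a$ and $b$; this hinges on the constancy in $t$ of $\widehat\zeta^*J\widehat\phi$, with additional care needed when $t_0$ is left-scattered, where the second branch of \eqref{bep5.44} (together with $y_2^\rho(t_0)=0$) is exactly what makes $ly$ agree with $J\widehat y^\Delta-By$ at $t_0$.
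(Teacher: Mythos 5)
Your proposal is correct. Note first that the paper offers no argument of its own for this lemma: its proof is the citation ``See \cite[Lemma 5.7]{b2} and \cite[Lemma 5.5]{ms},'' and what you have written is essentially a faithful reconstruction of that outsourced argument in the time-scale setting. The computational core checks out: from Lemma \ref{mslem2.4} one gets $\widehat{Z}^*(t)J\widehat{Y}(t)\equiv J$, so for $\widehat{y}=\widehat{\theta}a+\widehat{\phi}b$ the condition at infinity in \eqref{bep5.44} reads
$$ \lim_{t\rightarrow\infty}\widehat{\zeta}^*(t,\xi)J\widehat{y}(t) = \left(I_n|M(\xi)\right)J\left(\begin{smallmatrix} a\\ b\end{smallmatrix}\right) = M(\xi)a-b = 0, $$
which together with $a=y_2^\rho(t_0)=0$ forces $b=0$; your observation that the two side conditions decouple cleanly onto $a$ and $b$ is exactly the heart of the matter. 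The remaining steps — domain equality from injectivity plus the right-inverse identity of the preceding lemma, membership $\xi\in r(L_\xi)$ from the boundedness theorem (where your appeal to \eqref{ms4.1} and \eqref{ms3.13} to place $\xi\in\Lambda(\lambda_0,\mathscr{U}_{2n})\cap\widetilde{\Lambda}(\lambda_0,\mathscr{U}_{2n}^{-1})$ is needed and correct), and closedness by flipping the closed graph of the bounded, everywhere-defined $R_\xi$ — are standard, as is the mirrored argument for $\widetilde{L}_\xi$ built on \eqref{bep5.45} and the constancy $\widehat{\psi}^*(t)J\widehat{\chi}(t)\equiv -I_n$.

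One caveat deserves mention. You invoke ``the left-endpoint condition $y_2^\rho(t_0)=0$ built into $\mathscr{D}(L_\xi)$,'' but the printed definition \eqref{bep5.44} states this convention only when $t_0$ is left-scattered; when $t_0$ is left-dense, so that $y_2^\rho(t_0)=y_2(t_0)$, no such condition appears explicitly. Your reading is nevertheless the intended one: it matches the boundary condition $\widehat{\chi}^*(t_0)J\widehat{y}(t_0)=0$, which is precisely $y_2^\rho(t_0)=0$ and is verified for $R_\lambda f$ in Lemma \ref{lemma5.1}, and it agrees with the domains used in \cite{b2} and \cite{ms}. Moreover it is forced: without it, the kernel of $L_\xi-\xi$ would contain the $A$-square-integrable Weyl solution $\psi(\cdot,\xi)$ (take $a\ne 0$, $b=M(\xi)a$ above), and both the injectivity and the equality $\range R_\xi=\mathscr{D}(L_\xi)$ would fail. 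So the only gap you bridged lies in the paper's own definition, not in your argument.
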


\begin{proof}
See \cite[Lemma 5.7]{b2} and \cite[Lemma 5.5]{ms}.
\end{proof}

% Lemma %

\begin{lemma}
The space $\mathscr{D}(L_\xi)$ is dense in $L_A^2(t_0,\infty)_\T$. Also, $\widetilde{L}_{\xi}=L_{\xi}^*$, the adjoint of $L_{\xi}$.
\end{lemma}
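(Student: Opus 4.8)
The plan is to deduce both assertions from a single Hilbert-space identity, namely that the bounded resolvents $R_\xi$ and $\widetilde{R}_\xi$ are mutually adjoint in the $A$-inner product:
$$ (R_\xi f, g)_A = (f, \widetilde{R}_\xi g)_A, \qquad f,g\in L_A^2(t_0,\infty)_\T. $$
First I would establish this by writing out both sides via \eqref{bep5.6} and \eqref{bep5.7}, interchanging the order of delta-integration, and invoking the kernel symmetry $\widetilde{G}(t,s,\xi)=G^*(s,t,\xi)$ recorded in the definition of $\widetilde{G}$. Concretely, $(R_\xi f,g)_A=\int\!\int g^*(t)A(t)G(t,s,\xi)A(s)f(s)\,\Delta s\,\Delta t$, and after renaming the integration variables and using $A^*=A$ together with $G(t,s,\xi)=\widetilde{G}^*(s,t,\xi)$, this matches $\int(\widetilde{R}_\xi g)^*(t)A(t)f(t)\,\Delta t=(f,\widetilde{R}_\xi g)_A$. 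Since $\widetilde{R}_\xi$ is bounded, this identity is precisely the operator statement $R_\xi^*=\widetilde{R}_\xi$ on $L_A^2(t_0,\infty)_\T$.

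For density, recall from the preceding lemma that $\range R_\xi=\mathscr{D}(L_\xi)$, so it suffices to show $(\range R_\xi)^{\perp}=\{0\}$. Suppose $g\in L_A^2(t_0,\infty)_\T$ satisfies $(R_\xi f,g)_A=0$ for every $f$. By the adjoint identity this reads $(f,\widetilde{R}_\xi g)_A=0$ for all $f$, so $\widetilde{R}_\xi g=0$ in $L_A^2(t_0,\infty)_\T$, that is $A(\widetilde{R}_\xi g)=0$. The injectivity hypothesis \eqref{bep5.43} then forces $Ag=0$, i.e. $g=0$ in $L_A^2(t_0,\infty)_\T$. Hence $\mathscr{D}(L_\xi)$ has trivial orthogonal complement and is therefore dense.

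With density established, $L_\xi^*$ is well defined, and I would finish using the resolvent description from the preceding lemma together with the standard fact that, for a densely defined closed operator with $\xi$ in its resolvent set, $[(L_\xi-\xi)^{-1}]^*=(L_\xi^*-\overline{\xi})^{-1}$. Since $(L_\xi-\xi)^{-1}=R_\xi$ and $(\widetilde{L}_\xi-\overline{\xi})^{-1}=\widetilde{R}_\xi$, the operator identity $R_\xi^*=\widetilde{R}_\xi$ yields
$$ (L_\xi^*-\overline{\xi})^{-1}=R_\xi^*=\widetilde{R}_\xi=(\widetilde{L}_\xi-\overline{\xi})^{-1}. $$
Because $\widetilde{R}_\xi$ is injective (by the earlier lemma on $\widetilde{R}_\lambda$), its inverse is unambiguous, so $L_\xi^*-\overline{\xi}=\widetilde{L}_\xi-\overline{\xi}$ on a common domain, giving $\widetilde{L}_\xi=L_\xi^*$.

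The main obstacle is the kernel computation underlying $R_\xi^*=\widetilde{R}_\xi$: one must justify the interchange of delta-integrals on the Sturmian time scale and, in particular, track the point-mass contributions $N(t)\delta_{ts}$ and $N^*(t)\delta_{ts}$ on the diagonals of $G$ and $\widetilde{G}$, verifying that they are matched exactly under the kernel symmetry so that no spurious diagonal term survives. A secondary point requiring care is that $L_A^2(t_0,\infty)_\T$ is a quotient space, so every equality must be read modulo $\|\cdot\|_A=0$; the hypotheses \eqref{bep5.42} and \eqref{bep5.43} are precisely what make $R_\xi$ and $\widetilde{R}_\xi$, and hence the adjoint bookkeeping above, well defined on equivalence classes.
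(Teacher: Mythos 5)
Your proposal is correct and follows essentially the same route as the paper, which itself gives no details but defers to \cite[Lemma 5.8]{b2} and \cite[Lemma 5.6]{ms}: those proofs likewise rest on the mutual adjointness $R_\xi^*=\widetilde{R}_\xi$ coming from the kernel symmetry $\widetilde{G}(t,s,\xi)=G^*(s,t,\xi)$, deduce density of $\mathscr{D}(L_\xi)=\range R_{\xi}$ from the injectivity hypothesis \eqref{bep5.43}, and then identify $\widetilde{L}_\xi=L_\xi^*$ by comparing the resolvents $(L_\xi^*-\overline{\xi})^{-1}=R_\xi^*=\widetilde{R}_\xi=(\widetilde{L}_\xi-\overline{\xi})^{-1}$. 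The Fubini/diagonal-term verification you flag is indeed the only step needing real care (the cited proofs justify the interchange by truncating to finite intervals and using the $A$-square integrability of the columns of $\psi$ and $\zeta$), but it does not change the structure of the argument.
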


\begin{proof}
See \cite[Lemma 5.8]{b2} and \cite[Lemma 5.6]{ms}.
\end{proof}

% Theorem %

\begin{theorem}
If $\xi\in\Lambda(\lambda_0,\mathscr{U}_{2n})$, then $\Lambda(\lambda_0,\mathscr{U}_{2n})\subset r(L_{\xi})$, and
$$ (L_{\xi}-\lambda)^{-1}=R_{\lambda}, \quad \lambda\in\Lambda(\lambda_0,\mathscr{U}_{2n}). $$
A corresponding statement holds for $\widetilde{R}_{\lambda}$ and $\widetilde{L}_{\xi}$. 
\end{theorem}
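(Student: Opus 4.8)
The plan is to show that, for each $\lambda\in\Lambda(\lambda_0,\mathscr{U}_{2n})$, the bounded operator $R_\lambda$ is a two-sided inverse of $L_\xi-\lambda$; this gives at once $\lambda\in r(L_\xi)$ and $(L_\xi-\lambda)^{-1}=R_\lambda$. The value $\lambda=\xi$ is already covered by the preceding lemmas, which deliver $\xi\in r(L_\xi)$, $\range R_\xi=\mathscr{D}(L_\xi)$ and $(L_\xi-\xi)^{-1}=R_\xi$, so the real content is the passage from the distinguished value $\xi$ to an arbitrary $\lambda$. I would organize the argument into two halves: first that $R_\lambda$ maps $L_A^2(t_0,\infty)_\T$ into $\mathscr{D}(L_\xi)$ with $(L_\xi-\lambda)R_\lambda f=f$ (surjectivity of $L_\xi-\lambda$), and then that $L_\xi-\lambda$ is injective.

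The equation part of membership in $\mathscr{D}(L_\xi)$ is handed to us by Lemma \ref{lemma5.1}: since $R_\lambda f$ solves \eqref{ms5.1} with spectral parameter $\lambda$, we have $l(R_\lambda f)=J(\widehat{R}_\lambda f)^\Delta-B\,(R_\lambda f)=A(\lambda R_\lambda f+f)$, and the same lemma supplies $(R_\lambda f)_2^\rho(t_0)=0_n$ together with the boundary identity at $t_0$ required in \eqref{bep5.44}. Granting membership, the definition \eqref{Lxi} then reads off $L_\xi R_\lambda f=\lambda R_\lambda f+f$, whence $(L_\xi-\lambda)R_\lambda f=f$.

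The substantive step---the one I expect to carry the difficulty---is the condition at infinity in \eqref{bep5.44}. Lemma \ref{lemma5.1} produces only $\lim_{t\to\infty}\widehat{\zeta}^*(t,\lambda)J(\widehat{R}_\lambda f)(t)=0$, the functional attached to $\lambda$, while $\mathscr{D}(L_\xi)$ demands the $\xi$-functional $\lim_{t\to\infty}\widehat{\zeta}^*(t,\xi)J(\widehat{R}_\lambda f)(t)=0$. To reconcile the two I would insert the explicit representation \eqref{ms5.5}: as $t\to\infty$ the tail $\int_t^\infty\zeta^*(s,\lambda)A(s)f(s)\Delta s$ tends to $0_n$ while $\int_{t_0}^t\chi^*(s,\lambda)A(s)f(s)\Delta s$ converges, so that $\widehat{\zeta}^*(t,\xi)J(\widehat{R}_\lambda f)(t)$ decomposes into the product $\widehat{\zeta}^*(t,\xi)J\widehat{\psi}(t,\lambda)$ against a convergent factor plus $\widehat{\zeta}^*(t,\xi)J\widehat{\phi}(t,\lambda)$ against a vanishing factor. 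The first product dies in the limit thanks to the cross-parameter relation \eqref{ms4.3} of Theorem \ref{thm4.1}, namely $\lim_{t\to\infty}\widehat{\zeta}^*(t,\xi)J\widehat{\psi}(t,\lambda)=0$; for the second it is enough that $\widehat{\zeta}^*(t,\xi)J\widehat{\phi}(t,\lambda)$ remain bounded, which I would extract by tracking the evolution of $\widehat{Z}^*(t,\xi)J\widehat{Y}(t,\lambda)$ through Green's formula (Theorem \ref{ibp}) and Lemma \ref{mslem2.4}, exactly as in the constancy computation \eqref{ms5.6} but now for mismatched parameters. Securing this last boundedness is the main obstacle; the remaining estimates are routine.

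For the injectivity I would couple the problem with its adjoint, using $\widetilde{L}_\xi=L_\xi^*$ established above. The construction of the first two steps, run verbatim for the adjoint system---with $\widetilde{R}_\lambda$ in place of $R_\lambda$, the functional from \eqref{bep5.45}, and the tilde-analogue of \eqref{ms4.3}---shows that $\widetilde{L}_\xi-\overline{\lambda}$ is likewise surjective. Since $\widetilde{L}_\xi-\overline{\lambda}=(L_\xi-\lambda)^*$ and $\ker(L_\xi-\lambda)=\range\left((L_\xi-\lambda)^*\right)^{\perp}$, surjectivity of the adjoint forces $\ker(L_\xi-\lambda)=\{0\}$; symmetrically, surjectivity of $L_\xi-\lambda$ forces $\widetilde{L}_\xi-\overline{\lambda}$ injective. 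Hence both $L_\xi-\lambda$ and $\widetilde{L}_\xi-\overline{\lambda}$ are bijections, and as a bounded right inverse of an injective operator $R_\lambda$ coincides with $(L_\xi-\lambda)^{-1}$. We conclude $\lambda\in r(L_\xi)$ with $(L_\xi-\lambda)^{-1}=R_\lambda$, and the companion statement for $\widetilde{R}_\lambda$ and $\widetilde{L}_\xi$ falls out of the same coupled argument.
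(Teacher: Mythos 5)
Your overall skeleton is viable and, in outline, consistent with the argument the paper defers to (its proof is a citation to \cite[Theorem 5.9]{b2} and \cite[Theorem 5.7]{ms}): right-inverse property for $R_\lambda$, the same for the adjoint problem, and then injectivity of $L_\xi-\lambda$ via $\widetilde{L}_\xi=L_\xi^*$ together with closedness and density from the preceding lemmas. However, there is a genuine gap precisely at the step you yourself flag as the main obstacle: the boundedness of $\widehat{\zeta}^*(t,\xi)J\widehat{\phi}(t,\lambda)$ is \emph{false} in general, so your decomposition of \eqref{ms5.5} cannot be closed this way. Indeed, the mismatched-parameter Green's formula (Theorem \ref{ibp} with \eqref{ms2.7}, as in the computation leading to \eqref{ms5.6}) gives
\begin{equation*}
 \widehat{\zeta}^*(t,\xi)J\widehat{\phi}(t,\lambda) \;=\; -I_n + (\lambda-\xi)\int_{t_0}^{t}\zeta^*(s,\xi)A(s)\phi(s,\lambda)\,\Delta s,
\end{equation*}
and $\phi(\cdot,\lambda)$ is in general \emph{not} $A$-square integrable---in the limit-point situation $\psi=\theta+\phi M$ is the distinguished integrable combination exactly because $\phi$ grows---so this integral can diverge. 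Already for $\T=\R$, $n=1$, $-v''=\lambda v$ on $[0,\infty)$, $\zeta(\cdot,\xi)$ decays at rate $\im\sqrt{\xi}$ while $\phi(\cdot,\lambda)$ grows at rate $\im\sqrt{\lambda}$, so the pairing grows exponentially whenever $\im\sqrt{\lambda}>\im\sqrt{\xi}$, which certainly occurs with both parameters in the same cone. Since the tail $\int_t^\infty\zeta^*(s,\lambda)A(s)f(s)\,\Delta s$ tends to zero with no uniform rate for general $f\in L_A^2(t_0,\infty)_\T$, the product estimate you need does not follow, and "tracking the evolution through Green's formula" produces exactly the divergent integral above rather than a bound.

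The standard repair---and the route of the cited proofs---is to avoid asymptotics at infinity altogether by establishing the first resolvent identity $R_\lambda - R_\xi = (\lambda-\xi)R_\xi R_\lambda$ directly on the kernels, using the $M$-function difference formula \eqref{ms4.2}, the cross-parameter limits \eqref{ms4.3}, and the constancy relations from Lemma \ref{mslem2.4} and \eqref{ms5.6}. Then $R_\lambda f = R_\xi\left(f+(\lambda-\xi)R_\lambda f\right)\in\range R_\xi=\mathscr{D}(L_\xi)$, so membership in the domain---including the $\xi$-boundary condition at infinity in \eqref{bep5.44}---is inherited automatically from the earlier lemma $\range R_\xi=\mathscr{D}(L_\xi)$, and applying $L_\xi-\xi$ yields $(L_\xi-\lambda)R_\lambda f=f$. (A partial salvage of your approach does exist: the delta derivative of $\widehat{\zeta}^*(t,\xi)J(\widehat{R}_\lambda f)(t)$ equals $(\lambda-\xi)\zeta^*(\cdot,\xi)A(R_\lambda f)+\zeta^*(\cdot,\xi)Af$, which is integrable by Cauchy--Schwarz, so the limit at infinity \emph{exists}; but identifying it as zero again requires the kernel computation with \eqref{ms4.2}, i.e., essentially the resolvent identity.) Your duality argument for injectivity is correct as stated, but note that the adjoint half of your construction suffers from the same boundedness gap and must likewise be routed through the tilde analogue of the resolvent identity.
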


\begin{proof}
See \cite[Theorem 5.9]{b2} and \cite[Theorem 5.7]{ms}.
\end{proof}

% Theorem %

\begin{theorem}
If all solutions of \eqref{maineq} and \eqref{ms2.4} are $A-$square integrable for some $\lambda'\in\C$, and if
$$ A^{1/2}(t)N^*(t)A(t)N(t)A^{1/2}(t)\rightarrow 0 \quad\text{as}\quad t\rightarrow\infty $$
for $A$ in \eqref{ABdef} and $N$ in \eqref{Ndef}, then all solutions of \eqref{maineq} are $A-$square integrable for all $\lambda\in\C$.
\end{theorem}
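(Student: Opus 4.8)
\noindent The plan is to treat an arbitrary $\lambda$-solution of \eqref{maineq} as an inhomogeneous perturbation of the $\lambda'$-system and to control it by the $A$-square integrability at $\lambda'$. First I would observe that $y(\cdot,\lambda)$ solves \eqref{maineq} if and only if it solves the inhomogeneous system \eqref{ms5.1} at the parameter $\lambda'$ with forcing $f=(\lambda-\lambda')y$, since $(\lambda A+B)y=(\lambda'A+B)y+(\lambda-\lambda')Ay$. Applying the variation-of-parameters computation of Lemma \ref{lemma5.1} (with $\lambda'$ in place of $\lambda$ and the fundamental systems $\widehat{Y}(\cdot,\lambda')$, $\widehat{Z}(\cdot,\lambda')$ of \eqref{ms2.3}, \eqref{ms2.6} normalized as in Remark \ref{rmk1.4}), together with Lemma \ref{mslem2.4} and the transformation \eqref{ms5.2}, yields the integral equation
$$ y(t,\lambda)=Y(t,\lambda')c+(\lambda-\lambda')\int_{t_0}^{t}Y(t,\lambda')(-J)Z^*(s,\lambda')A(s)y(s,\lambda)\,\Delta s+(\lambda-\lambda')N(t)A(t)y(t,\lambda), $$
for a constant $c$ fixed by the initial data. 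The crucial point is that, by hypothesis, every column of $Y(\cdot,\lambda')$ and of $Z(\cdot,\lambda')$ is $A$-square integrable.

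Next I would estimate $\eta(T):=\|y(\cdot,\lambda)\|_{A,[t_0,T]}$ and absorb. Writing $Y_0:=\|Y(\cdot,\lambda')\|_A$ and $Z_0:=\|Z(\cdot,\lambda')\|_A$ (both finite), a Cauchy--Schwarz estimate in the $A$-inner product bounds the inner integral by $Z_0\,\eta(T)$ uniformly in $t$, so the middle term contributes at most $|\lambda-\lambda'|\,Y_0Z_0\,\eta(T)$ to $\eta(T)$, while the homogeneous term contributes the finite constant $\|Y(\cdot,\lambda')c\|_A$. The genuinely new, time-scale term is the last one. Here I would use the hypothesis: setting $P(t):=A^{1/2}(t)N(t)A^{1/2}(t)$ one has $A^{1/2}N^*ANA^{1/2}=P^*P$, whose operator norm tends to $0$; hence, given $\varepsilon>0$, there is $T_1$ with $\|(P^*P)(t)\|<\varepsilon$ for $t\ge T_1$, so that $\int_{T_1}^{T}y^*AN^*ANAy\,\Delta t\le\varepsilon\,\eta(T)^2$, while on the compact set $[t_0,T_1]$ this integral is a finite constant. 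Collecting these bounds gives an inequality of the form $\eta(T)\le K_1+\big(|\lambda-\lambda'|(Y_0Z_0+\sqrt{\varepsilon})\big)\eta(T)$ with $K_1$ independent of $T$; whenever $|\lambda-\lambda'|<1/(Y_0Z_0+1)$, say, the $\eta(T)$-term absorbs to the left and $\eta(T)\le K$ uniformly in $T$. Letting $T\to\infty$ shows $y(\cdot,\lambda)\in L_A^2$, and since the argument is uniform in the initial data $c$, all solutions of \eqref{maineq} are $A$-square integrable for such $\lambda$.

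To propagate the property I would run the same argument for the adjoint system \eqref{ms2.4}: its solutions obey the analogue of \eqref{ms5.1} with transformation \eqref{zzwide}, where the new term involves $N^*$ rather than $N$; but $A^{1/2}NAN^*A^{1/2}=PP^*$ has the same norm as $P^*P$, so the single hypothesis controls both and all solutions of \eqref{ms2.4} at $\lambda$ are also $A$-square integrable whenever $|\lambda-\lambda'|$ is small. Thus the set of $\mu\in\C$ at which all solutions of both \eqref{maineq} and \eqref{ms2.4} are $A$-square integrable is open; moreover the explicit bound $K$ above also bounds $\|Y(\cdot,\lambda)\|_A$ and $\|Z(\cdot,\lambda)\|_A$ on the disk about $\mu$, so the admissible radius $r(\mu)\gtrsim 1/(Y_0(\mu)Z_0(\mu))$ is locally bounded below. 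Finally, given any target $\lambda^*\in\C$, I would join $\lambda'$ to $\lambda^*$ by a compact path, cover it by finitely many disks of radius at least some $r_{\min}>0$, and chain the local step along them to conclude $A$-square integrability at $\lambda^*$; as $\lambda^*$ was arbitrary, the theorem follows.

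The main obstacle is not the local estimate but making the chaining reach all of $\C$: one must verify that the radius of invariance does not shrink to zero as the base point moves, that is, that the $A$-norms $Y_0(\mu),Z_0(\mu)$ of the fundamental systems stay locally bounded once the $L_A^2$ property is in force. The explicit bound produced by the absorption step supplies exactly this local uniformity. The one genuinely time-scale-specific difficulty, absent in the continuous case \cite{b2} where $N=0$, is the control of the $NAf$ correction term, which is precisely what the decay hypothesis $A^{1/2}N^*ANA^{1/2}\to0$ is designed to provide, via the identity $\|P^*P\|=\|PP^*\|=\|P\|^2$ that also secures the adjoint estimate.
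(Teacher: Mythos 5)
Your local computation is sound and in fact coincides with the paper's setup: the rewriting of the $\lambda$-equation as the inhomogeneous $\lambda'$-system \eqref{ms5.1} with forcing $(\lambda-\lambda')Ay$, the variation-of-parameters kernel $Y(t,\lambda')(-J)Z^*(s,\lambda')=\theta(t,\lambda')\chi^*(s,\lambda')-\phi(t,\lambda')\eta^*(s,\lambda')$ (which equals the paper's $\psi\chi^*-\phi\zeta^*$, since the $M$-terms cancel), the extra time-scale term $N(t)A(t)y(t,\lambda)$ from \eqref{ms5.2}, and the treatment of that term via $P=A^{1/2}NA^{1/2}$, $\|P^*P\|=\|PP^*\|$, are all exactly what the decay hypothesis is for. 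The genuine gap is in your globalization. Because you integrate from the fixed base point $t_0$, your absorption inequality only closes for $|\lambda-\lambda'|<1/(Y_0Z_0+1)$, and you then try to reach arbitrary $\lambda^*\in\C$ by chaining disks. But the chaining as described does not work: at each new base point $\mu$ the admissible radius is of order $1/(Y_0(\mu)Z_0(\mu))$, and your own absorption bound shows the fundamental-system norms can grow by a multiplicative factor at every step; the radii then shrink roughly like the inverse square of these norms, so the step sizes can form a convergent geometric series and the continuation may cover only a finite distance, never reaching a given $\lambda^*$. Equivalently, you establish that the good set is open, but connectedness of $\C$ requires in addition closedness (or a uniform lower bound on the radius along the whole path), and your ``local uniformity'' of $K$ holds only on each half-disk, not along the path; nothing in the proposal rules out the radii shrinking summably.

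The paper avoids this entirely, and this is why its displayed formula has $\int_c^t$ rather than $\int_{t_0}^t$: following \cite[Theorem A.1]{ms} (and the classical argument in \cite[Chapter 9, Theorem 2.1]{cod}, \cite[Lemma 6.5]{b2}), one fixes an \emph{arbitrary} $\lambda\in\C$ first and then chooses the lower limit $c$ so large that the tail norms $\|\theta(\cdot,\lambda')\|_{A,[c,\infty)_\T}$, $\|\phi(\cdot,\lambda')\|_{A,[c,\infty)_\T}$, $\|\eta(\cdot,\lambda')\|_{A,[c,\infty)_\T}$, $\|\chi(\cdot,\lambda')\|_{A,[c,\infty)_\T}$ — which tend to $0$ precisely because the $\lambda'$-solutions are $A$-square integrable — together with $\sup_{t\ge c}\|A^{1/2}N^*ANA^{1/2}\|$ make the total absorption constant less than $1$ despite $|\lambda-\lambda'|$ being large. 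Absorption then gives $y(\cdot,\lambda)\in L_A^2(c,\infty)_\T$ in a single step, and finiteness on $[t_0,c]_\T$ is automatic; no smallness of $|\lambda-\lambda'|$, no chaining, and no continuity-in-$\lambda$ of the norms is needed. To repair your proof, replace the fixed lower limit $t_0$ by a $\lambda$-dependent large $c$ and delete the continuation argument; everything else you wrote then goes through.
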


\begin{proof}
Note that by a modified variation of constants approach (see \cite[Lemma 6.5]{b2}, \cite[Chapter 9 Theorem 2.1]{cod}, \cite[Theorem A.1]{ms}), any solution of
$$ J\widehat{y}^\Delta(t) = \Big(\lambda A(t)+B(t)\Big) y(t) = \Big(\lambda' A(t)+B(t)\Big) y(t) + (\lambda-\lambda')A(t)y(t), \quad t\in[t_0,\infty)_\T, $$
can be written as 
\begin{eqnarray*} 
 y(t,\lambda) &=& 
   \theta(t,\lambda')\gamma+\phi(t,\lambda')\widetilde{\gamma} \\ 
   & & +(\lambda-\lambda')\left(\int_{c}^{t}\left(\psi(t,\lambda')\chi^*(s,\lambda') 
       -\phi(t,\lambda')\zeta^*(s,\lambda')\right)A(s)y(s,\lambda) \Delta s+N(t)A(t)y(t,\lambda)\right)
\end{eqnarray*}
for some $\gamma,\widetilde{\gamma}\in\C^n$, where $N$ is given in \eqref{Ndef}. The remainder of the proof is similar to that given in \cite[Theorem A.1]{ms} and is omitted.
\end{proof}

%%%%%%%%%%%%%%%%%%%%%%%%%%%%
%  Bibliography            %
%%%%%%%%%%%%%%%%%%%%%%%%%%%%

\end{document}